\newtheorem{theorem}{Theorem}
\newtheorem{proposition}{Proposition}%
\newtheorem{lemma}{Lemma}
\newtheorem{assumption}{Assumption}%
\newcommand{\vertiii}[1]{{\left\vert\kern-0.25ex\left\vert\kern-0.25ex\left\vert #1 
    \right\vert\kern-0.25ex\right\vert\kern-0.25ex\right\vert}}
\begin{document}

\title[Random primal-dual algorithms for parallel MRI]{On the convergence and sampling of randomized primal-dual algorithms and their application to parallel MRI reconstruction}



\author*[1]{\fnm{Eric B} \sur{Gutierrez}}\email{ebgc20@bath.ac.uk}

\author[2]{\fnm{Claire} \sur{Delplancke}}\email{cd902@bath.ac.uk}

\author[1]{\fnm{Matthias J} \sur{Ehrhardt}}\email{M.Ehrhardt@bath.ac.uk}

\affil[1]{\orgdiv{Department of Mathematical Sciences}, \orgname{University of Bath}, \orgaddress{\postcode{BA2 7AY}, \country{UK}}}

\affil[2]{\orgdiv{EDF Lab Paris-Saclay},\orgaddress{ \street{route de Saclay}, \postcode{91300 Palaiseau}, \country{France}}}


\abstract{Stochastic Primal-Dual Hybrid Gradient (SPDHG) is an algorithm proposed by Chambolle et al. (2018)\ to efficiently solve a wide class of nonsmooth large-scale optimization problems. In this paper we contribute to its theoretical foundations and prove its almost sure convergence for convex but neither necessarily strongly convex nor smooth functionals, as well as for any random sampling. 
In addition, we {study} SPDHG for parallel Magnetic Resonance Imaging reconstruction, where data from different coils are randomly selected at each iteration. We apply SPDHG using a wide range of random sampling methods {and} compare its performance across a range of settings, including mini-batch size and step size parameters. 
We show that the sampling can significantly affect the convergence speed of SPDHG {and} for many cases an optimal sampling can be identified. }

\keywords{Inverse Problems, Parallel MRI, Primal-dual Methods, Stochastic Optimization}

\maketitle

\section{Introduction}\label{sec:intro}

Inverse problems can be solved using variational regularization, generally presented as an optimization problem~\cite{benning2018modern}. In fields such as imaging, data science or machine learning, optimization challenges are  generally formulated as the convex minimization problem
\begin{equation} \label{min}
    \hat{x} \in \arg\min_{x\in X} \sum_{i=1}^n f_i(A_ix) + g(x) 
\end{equation}
where $f_i:Y_i\to\mathbb{R}\cup\{\infty\}$ and $g:X\to\mathbb{R}\cup\{\infty\}$ are convex functionals, and $A_i:X\to Y_i$ are linear and bounded operators between real Hilbert spaces.

Many active areas of research exists within this framework, with problems such as regularized risk minimization~\cite{boserGuyonVapnik, cevherBigData,shalevZhang,zhangXiao}, heavily-constrained optimization~\cite{fercoq_etal2019,patrascuNecoara2017} or total variation regularization image reconstruction~\cite{ROF}.
Within the context of image reconstruction, examples of problems in the form of~(\ref{min}) are image denoising~\cite{chambolle2016introduction}, PET reconstruction~\cite{ehrhardt2019faster} and, most relevant to the present paper, parallel Magnetic Resonance Imaging (MRI) reconstruction~\cite{mri,hager2015alternating,pruessmann2006encoding}. 

Problem~(\ref{min}) makes no assumptions on the differentiability of the convex functionals $f_i$, $g$. In general, if these functionals are not smooth then many classical approaches such as gradient descent are not applicable \cite{chambolle2016introduction}. In contrast, primal-dual methods are able to solve (\ref{min}) even for non-smooth functionals. When $f_i$, $g$ are convex, proper and lower-semicontinuous, the \textit{saddle point problem} for (\ref{min}) is
\begin{equation} \label{saddle}
    \hat{x},\hat{y} \in \arg\min_{x\in X}\max_{y\in Y} L(x,y)
\end{equation}
with $y:=(y_1,...,y_n)$, $Y:=\Pi_{i=1}^n Y_i$ and
$$ L(x,y) := \sum_{i=1}^n \{ \langle A_ix,y_i \rangle - f_i^*(y_i) \} + g(x) $$
where $f_i^*$ denotes the \textit{convex conjugate} of $f_i$. We refer to a solution of~(\ref{saddle}) as a \emph{saddle point}.

A well-known primal-dual method is the Primal-Dual Hybrid Gradient algorithm (PDHG) \cite{chambollePock,esserZhangChan,CremersChambollePock}.
While PDHG is proven to converge to a saddle point, its iterations can be costly for large-scale problems, e.g.~when $n\gg1$ \cite{spdhg}.
A random primal-dual method, the Stochastic Primal-Dual Hybrid Gradient algorithm (SPDHG) was proposed recently by Chambolle et al.~\cite{spdhg}. Its main difference over PDHG is that it reduces the per-iteration computational cost by randomly sampling the dual variable, i.e.~only a random subset of the dual variable gets updated at every iteration. In~\cite{spdhg}, it is shown that SPDHG offers significantly better performance than the deterministic PDHG for large-scale problems \cite{ehrhardt2019faster,Schramm_Holler}. 
Examples of other random primal-dual algorithms can be found in
\cite{fercoqBianchi2019,gao2019randomPD,latafat2019randomPD,zhangXiao}.

In the original paper~\cite{spdhg}, SPDHG's almost sure convergence is proven for strongly convex functionals $f^*_i, g$. Later, for the special case of {serial sampling}, Alacaoglu et al.~proved SPDHG's almost sure convergence for arbitrary convex functionals in finite-dimensional Hilbert spaces~\cite{alacaoglu}. An alternative proof can also be found in~\cite{gutierrez}.

In this paper we complete the gap on the convergence theory of SPDHG and prove its almost sure convergence for any convex functionals on Hilbert spaces and for any arbitrary random sampling.

This generalizes the existing convergence results in two ways. The first is the extension into Hilbert spaces, 
which are important in many areas such as quantum mechanics and partial differential equations~\cite{sakurai2014quantum}.
%

The second generalization is the convergence of SPDHG for any arbitrary sampling. The proofs for convergence presented in both~\cite{alacaoglu} and~\cite{gutierrez} rely heavily on the assumption that SPDHG uses serial sampling for choosing the random updates. In this paper we present a proof of convergence for any sampling, and we propose adequate step size conditions for several specific examples of random samplings.

Furthermore, as a novel application of SPDHG we perform parallel MRI reconstruction on real data. Parallel MRI constitutes an inverse problem with a data structure that can be naturally randomized, and serves as an ideal candidate to compare performance of SPDHG under different random samplings.
Our numerical examples show that information on the forward problem can help identify an optimal sampling as well as optimal step size parameters.

\section{Stochastic Primal-Dual Hybrid Gradient}
\label{sec:alg}

Before presenting the main convergence results, we formally introduce SPDHG, which is the result of randomizing the dual variable in the deterministic PDHG method. PDHG with dual extrapolation~\cite{chambollePock, pock2011diagonal} applied to Problem~(\ref{min}) is
\begin{equation} \label{det}
\begin{aligned}
    x^{k+1} &=\mathrm{prox}_{\tau g}(x^{k} - \tau A^*\bar{y}^{k}) \\
    y_i^{k+1} &= 
    \mathrm{prox}_{\sigma f_i^{*}}(y_i^{k}+\sigma_i A_ix^{k+1})
    \text{ for all } i \\
    \bar{y}^{k+1} &= y^{k+1} + \theta(y^{k+1} - y^{k})
\end{aligned}
\end{equation}
%
%
where $\tau,\sigma >0$ are step-size parameters, $\bar{y}^{k+1}$ is an extrapolation with parameter $\theta\in\mathbb{R}$, operator $A^*:Y\to X$ is $A^{*}y=\sum_iA^*_iy_i$,
and the \textit{proximity operator}~\cite{bauschkeCombettes}~is
\begin{equation*}
    \mathrm{prox}_{h}(v) :=  \arg\min_{u} \left\{ \frac{\|v-u\|^2}{2} + h(u) \right\}.
\end{equation*}
This method is proven to converge for $\theta=1$ and step size condition $\tau\sigma\|A^*\|^2<1$ for every $i\in\{1,...,n\}$, for any convex functionals $g,f^*_i$ \cite{chambollePock}. 

SPDHG, in contrast, reduces the cost of (\ref{det}) by updating only a random subset of the coordinates $(y_i)_{i=1}^n$.
This means,
at every iteration $k$, a subset $\mathbb{S}^k\subset\{1,...,n\}$ is chosen at random
and only the variables $y_{i}^{k+1}$ for $i\in\mathbb{S}^k$ are updated, while the rest remain unchanged:
\begin{equation*}
    y_{i}^{k+1}  = \left\{ 
    \begin{tabular}{ll}
    $\mathrm{prox}_{\sigma_i f_{i}^{*}}(y_{i}^{k}+\sigma_{i} A_{i}x^{k+1})$ \quad &  if $i\in\mathbb{S}^{k}$ \\
    $y_{i}^{k}$ & else.
    \end{tabular} \right.
\end{equation*}
Here we allow parameter $\sigma_i$ to be different for each $i$. Furthermore, 
we assume the random variables $\mathbb{S}^k$ are independent and identically distributed, and that the sampling must be {proper}, i.e.\ it must satisfy
\begin{equation}\label{pi}
    p_i:=\mathbb{P}(i\in\mathbb{S}^k)>0 \quad \textrm{for every } i\in\{1,...,n\},
\end{equation}
meaning the probability $p_i$ of updating any coordinate~$i$ must be nonzero.
The complete SPDHG algorithm is given in Algorithm~\ref{alg_2}.

\begin{algorithm} 
\caption{SPDHG}
\label{alg_2} 
\begin{algorithmic}
\State Choose $x^0\in X$ and $y^0\in Y$.
\State Set $z^0=\bar{z}^0=A^*y^0$. 
\For{$k\geq0$}
\State select $\mathbb{S}^{k} \subset\{1,...\,,n\}$ at random
\State $x^{k+1} =\mathrm{prox}_{\tau g}(x^{k} - \tau\bar{z}^k)$
\State $y_{i}^{k+1}  = \left\{ 
        \begin{tabular}{ll}
        $\mathrm{prox}_{\sigma_i f_{i}^{*}}(y_{i}^{k}+\sigma_{i} A_{i}x^{k+1})$ & \hspace{-4.5mm} if $i\in\mathbb{S}^{k}$ \\
        $y_{i}^{k}$ & \hspace{-4.5mm} else
        \end{tabular} \right.$
\State \hspace{3.5mm} $\delta_{i}  = A^*_{i}(y_{i}^{k+1}-y_{i}^k)$ \hspace{14.5mm} for all $i\in\mathbb{S}^{k}$
\State $z^{k+1} = z^k + \sum_{i\in\mathbb{S}^k}\delta_i$
\State $\bar{z}^{k+1} = z^{k+1} + \theta\sum_{i\in\mathbb{S}^k}p_{i}^{-1}\delta_i $
\EndFor
\end{algorithmic}
\end{algorithm}

In order to minimize the number of linear operations, the auxiliary variable $z^k$ stores the current value of $A^*y^k$, so that only the operators $A^*_i$ for $i\in\mathbb{S}^k$ need to be evaluated at each iteration. 
Similarly, $\bar{z}^{k+1}$ represents the extrapolation
\begin{equation*}
    \bar{z}^{k+1} = A^*y^{k+1} + A^*Q(y^{k+1}-y^k)
\end{equation*}
where $Q:Y\to Y$ is the operator defined by
$ (Qy)_i=p_i^{-1}y_i .$
We will often refer to the fact that $Q$ is symmetric and positive definite.

For {serial sampling}, where only one coordinate is selected at every iteration, i.e. $|\mathbb{S}^k|=1$ for every $k$, SPDHG takes the form of Algorithm~\ref{alg_spdhg}, which is the special case of Algorithm~\ref{alg_2} where we set $\mathbb{S}^k=\{j^k\}$ and only $y_{j^k}^{k+1}$ and $\delta = \delta_{j^k}$ are activated at each iteration $k$.

\begin{algorithm} 
\caption{SPDHG for serial sampling}
\label{alg_spdhg} 
\begin{algorithmic} 
\State Choose $x^0\in X$ and $y^0\in Y$.
\State Set $z^0=\bar{z}^0=A^*y^0$. 
\For{$k\geq0$}
\State select $j^{k} \in\{1,...\,,n\}$ at random
\State $x^{k+1} =\mathrm{prox}_{\tau g}(x^{k} - \tau\bar{z}^k)$
\State $y_{i}^{k+1}  = \left\{ 
        \begin{tabular}{ll}
        $\mathrm{prox}_{\sigma_i f_{i}^{*}}(y_{i}^{k}+\sigma_{i} A_{i}x^{k+1})$ \quad & \hspace{-5.5mm} if $i=j^{k}$ \\
        $y_{i}^{k}$ & \hspace{-5.5mm}  else
        \end{tabular} \right.$
\State \hspace{4.5mm} $\delta = A^*_{j^k}(y_{j^k}^{k+1}-y_{j^k}^k)$
\State $z^{k+1} = z^k + \delta$
\State $\bar{z}^{k+1} = z^{k+1} + \theta p_{j^k}^{-1}\delta $
\EndFor
\end{algorithmic}
\end{algorithm}

\section{Convergence of SPDHG}
\label{sec:main}
In this section we state the almost sure convergence of SPDHG to a saddle point of the primal-dual problem~(\ref{saddle}), as well as the conditions under which this is true. This result extends the usefulness of SPDHG by generalizing its existing convergence theory into any arbitrary sampling, as well as into Hilbert spaces. 

Hilbert spaces are relevant in imaging since 2D images can be modelled by square-integrable functions on the plane~\cite{bredieslorenz}.
Examples of imaging problems in Hilbert spaces are wavelet shrinkage denoising, where the challenge consists in finding a sparse representation of the image in the infinite basis~\cite{vetterli1995wavelets}; or super-resolution imaging, which describes the inverse problem of finding the square-integrable function that best approximates a set of low-resolution images~\cite{chaudhuri2001super}.


\begin{assumption} \label{assu}
Problem~(\ref{saddle}) satisfies:
\begin{enumerate}
    \item The Hilbert spaces $X,Y$ are real and separable. \label{assu0}
    \item The set of solutions to~(\ref{saddle}) is nonempty. \label{assu1}
    \item The functionals $g,f_i$ are convex, proper and lower-semicontinuous. \label{assu2}
    \item The proximity operators $\mathrm{prox}_{\tau g}$, $\mathrm{prox}_{\sigma_i f_i^*}$ are weakly sequentially continuous. \label{assu3}
\end{enumerate}
\end{assumption}

Assumptions (\ref{assu0}), (\ref{assu1}) and (\ref{assu2}) are as in the original SPDHG result (\cite{spdhg}, Theorem~4.3), and
Assumption~(\ref{assu1}) is further discussed in~\cite{alacaoglu}.
Assumption (\ref{assu3}) is always satisfied in finite dimensions, since the proximity operator $\mathrm{prox}_h$ is continuous for any convex, proper and lower-semicontinuous functional $h$ (\cite{bauschkeCombettes}, Proposition 12.28).

In general, a function $T$ is \textit{weakly sequentially continuous} if, for every sequence $(v^k)_{k\in\mathbb{N}}$ converging weakly to $v$, the sequence $(T(v^k))_{k\in\mathbb{N}}$ converges weakly to $T(v)$.{ For examples of separable sums of convex functions that satisfy Assumption~(\ref{assu3}), see (\cite{bauschkeCombettes}, Proposition~24.12). }

{We recall that a weak \textit{cluster point} of a sequence $(x^k)_{k\in\mathbb{N}}$ is any point $\hat{x}$ such that there exists a subsequence $(x^{\ell_k})_{k\in\mathbb{N}}$ which converges weakly to $\hat{x}$. Furthermore, for a probability space denoted by $(\mathbf{\Omega},\mathcal{F},\mathbb{P})$, we say a sequence $(x^k)_{k\in\mathbb{N}}$ converges weakly \textit{almost surely} to $\hat{x}$ if there exists $\Omega\in\mathcal{F}$ such that $\mathbb{P}(\Omega)=1$ and $x^k(\omega)\to\hat{x}(\omega)$ weakly for every $\omega\in\Omega$.
For brevity, we write \textit{a.s.} instead of {almost surely}.
}

Additionally, in order to state the step size conditions, we use the following notation. For any subset $\mathcal{S}\subset\{1,...,n\}$, let $A_\mathcal{S}:X\to Y$ be given by 
\begin{equation*}
    (A_\mathcal{S}x)_i := \left\{ \begin{tabular}{ll}
        $A_ix$ & if $i\in\mathcal{S}$ \\
        $0$ & else
    \end{tabular} \right. 
\end{equation*}
%
with adjoint 
$A_\mathcal{S}^*y  = \sum_{i\in\mathcal{S}} A_i^*y_i$. 

Similarly, denote $C_i = \tau^{1/2}\sigma^{1/2}_i A_i$ for every $i\in\{1,..,n\}$, with adjoints $C_i^*=\tau^{1/2}\sigma_i^{1/2}A^*_i$, and $C_\mathcal{S}$ and $C_\mathcal{S}^*$ defined accordingly as above.
For any random sampling $\mathbb{S}$, we define the step size operator $D$ as 
\begin{equation} \label{B}
    D := Q\mathbb{E}\left(C_\mathbb{S}C_\mathbb{S}^*\right)Q.
\end{equation}

\begin{theorem}[\textbf{Convergence of SPDHG}]
\label{main}
Let Assumption~\ref{assu} be satisfied, $\theta=1$ and
\begin{equation} \label{assu_i}
    \|D\| < 1.
\end{equation}
Then for any proper sampling, {SPDHG}
(Algorithm~\ref{alg_2}) converges weakly a.s.\ to a solution of~(\ref{saddle}).
\end{theorem}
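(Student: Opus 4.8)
The plan is to adapt the standard PDHG/SPDHG convergence machinery — establishing a Fejér-type monotonicity of a suitable iterate distance along a subsequence, plus an Opial-type argument — but carried out in the appropriate stochastic and infinite-dimensional setting. First I would introduce the natural weighted norm: since the dual update only touches coordinates in $\mathbb{S}^k$ and the extrapolation carries the $p_i^{-1}$ factor, the correct Lyapunov functional should measure $\|x^k-\hat x\|^2/\tau$ plus a $Q$-weighted dual term $\langle Q^{-1}(y^k-\hat y), S(y^k - \hat y)\rangle$ for an appropriate positive operator $S$ involving $\mathrm{diag}(\sigma_i^{-1})$, together with a cross term coming from the extrapolation $\bar z$. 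The key identity is the one-step descent estimate: using the prox inequalities (firm nonexpansiveness / the subdifferential characterization of the prox) for $x^{k+1}$ and for each updated $y_i^{k+1}$, one obtains, after taking conditional expectation $\mathbb{E}[\,\cdot\mid\mathcal{F}^k]$, an inequality of the form
\begin{equation*}
\mathbb{E}[\Phi^{k+1}\mid\mathcal{F}^k] \le \Phi^k - (\text{nonnegative gap terms}) - \langle \text{coupling}\rangle,
\end{equation*}
where the residual quadratic form in the increments is controlled precisely when $\|D\|<1$ with $D = Q\mathbb{E}(C_\mathbb{S}C_\mathbb{S}^*)Q$; this is where the step size condition \eqref{assu_i} enters, replacing the scalar condition $\tau\sigma\|A\|^2<1$ of deterministic PDHG and the sampling-specific conditions of \cite{spdhg,alacaoglu}.

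Next I would invoke the Robbins–Siegmund (stochastic quasi-Fejér) lemma: from the supermartingale inequality above, $\Phi^k$ converges a.s.\ to a finite limit for each fixed saddle point $\hat z=(\hat x,\hat y)$, and the summed gap/increment terms are a.s.\ finite, which forces $x^{k+1}-x^k\to 0$, $y^{k+1}-y^k\to 0$ (in the relevant weighted sense) a.s., and more importantly that $\mathbb{E}[\|y^{k+1}-y^k\|^2\mid\mathcal{F}^k]\to 0$, hence the "full" dual increment (not just the sampled one) vanishes a.s. Boundedness of $\Phi^k$ gives boundedness of $(x^k,y^k)$ a.s., so in a separable Hilbert space there exist weak cluster points along random subsequences $(x^{\ell_k},y^{\ell_k})\rightharpoonup(x^\star,y^\star)$.

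Then I would show any such weak cluster point is a.s.\ a saddle point. Passing to the limit in the prox optimality conditions requires weak sequential continuity of $\mathrm{prox}_{\tau g}$ and $\mathrm{prox}_{\sigma_i f_i^*}$ — exactly Assumption~\ref{assu3} — together with the facts that the increments vanish and that $\bar z^k - z^k \to 0$; this yields $x^\star = \mathrm{prox}_{\tau g}(x^\star - \tau A^*y^\star)$ and $y_i^\star = \mathrm{prox}_{\sigma_i f_i^*}(y_i^\star + \sigma_i A_i x^\star)$ for all $i$, which are equivalent to the monotone inclusion characterizing a saddle point of $L$. Finally, to upgrade "a.s.\ a subsequence converges weakly to a saddle point" to "the whole sequence converges weakly a.s.\ to a (single) saddle point", I would use a stochastic Opial lemma: pick a countable dense set of saddle points (or use separability to get a countable dense set of test points), apply the Robbins–Siegmund conclusion simultaneously on a probability-one event for all of them, and conclude the weak limit is unique along the full sequence.

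The main obstacle I expect is the one-step stochastic descent inequality and, within it, the bookkeeping of the extrapolation term: in SPDHG the extrapolated dual $\bar z^{k+1}$ mixes $z^{k+1}$ with $Q$-weighted increments, so the cross terms do not telescope as cleanly as in deterministic PDHG, and one must choose the Lyapunov functional $\Phi^k$ (in particular the coefficient of the cross term and the use of $Q$ versus $Q^{-1}$) so that after taking conditional expectation the leftover quadratic form is exactly $\langle (I-D)(\cdot),(\cdot)\rangle$ up to harmless terms — this is the crux and the place where \eqref{assu_i} is genuinely needed. A secondary subtlety, absent in the finite-dimensional treatments \cite{alacaoglu,gutierrez}, is ensuring all limiting arguments (weak cluster points, Opial) are valid in a separable Hilbert space and that the exceptional null sets can be taken uniform over a countable family; separability (Assumption~\ref{assu0}) is what makes this possible.
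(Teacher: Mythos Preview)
Your proposal is correct and follows essentially the same route as the paper: a Lyapunov/supermartingale inequality (the paper's $\Delta^k_{\hat w}$, imported from \cite{alacaoglu,spdhg}) yielding vanishing increments via Robbins--Siegmund, then a stochastic Opial argument (the paper invokes \cite{combettesPesquet}) once weak cluster points are shown to be saddle points using the weak sequential continuity assumption. The only cosmetic difference is that the paper packages the cluster-point step through an explicit fixed-point operator $T_{\mathcal S}$ (so that ``full'' dual increments vanishing and the prox limit are handled simultaneously), whereas you propose to pass to the limit directly in the prox optimality conditions; these are equivalent, and your identification of the descent inequality and the $\|D\|<1$ control of the residual quadratic form as the crux is exactly right.
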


\subsection{Step size condition}
In the original paper of SPDHG \cite{spdhg}, the authors describe the step size condition using \textit{ESO parameters} \cite{quartz}. While these conditions are equivalent, we prefer to use~(\ref{assu_i}) because it offers a practical way to check the validity of the step-size parameters $\tau,\sigma_i$ by finding the eigenvalues of $D$. Moreover, from \cite{quartz} we know $D:Y\to Y$ is symmetric and positive semi-definite and can be expressed as
\begin{equation} \label{PCC}
    D = \left( \begin{tabular}{ccc}
        $D_{11}$ & $\cdots$ & $D_{1n}$ \\
        $\vdots$ & $\ddots$ & $\vdots$ \\
        $D_{n1}$ & $\cdots$ & $D_{nn}$ \\
    \end{tabular} \right)
\end{equation}
where $D_{ij}:Y_j\to Y_i$ are given by
\begin{equation*}
    D_{ij} = \frac{p_{ij}}{p_ip_j} C_iC_j^*
\end{equation*}
and $p_{ij} = \mathbb{P}\left(i\in\mathbb{S},j\in\mathbb{S}\right)$.
In contrast, ESO parameters are parameters $v_1,...,v_n$ such that 
\begin{equation} \label{ESO}
    \mathbb{E} \left\| C_{\mathbb{S}}^*z \right\|^2 \leq \sum_{i=1}^n p_iv_i\|z_i\|^2
\end{equation}
and the step size condition for SPDHG in~\cite{spdhg} is 
\begin{equation} \label{vi}
    v_i < p_i \quad \textrm{ for all } i\in\{1,...,n\}.
\end{equation}
The following lemma shows this condition is equivalent to~(\ref{assu_i}). 

\begin{lemma} \label{BESO}
Let $D$ be defined as in~(\ref{B}). Then $\|D\|<1$ if and only if there exist ESO parameters $v_i$ such that $v_i<p_i$ for $i\in\{1,...,n\}$.
\end{lemma}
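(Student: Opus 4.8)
The plan is to show the two conditions are equivalent by rephrasing both in terms of the quadratic form associated with $D$. First I would observe that the ESO inequality~(\ref{ESO}) is nothing but a statement about the expectation $\mathbb{E}\|C_\mathbb{S}^*z\|^2$, which I would rewrite as $\langle \mathbb{E}(C_\mathbb{S}C_\mathbb{S}^*)z, z\rangle$. Since $D = Q\,\mathbb{E}(C_\mathbb{S}C_\mathbb{S}^*)\,Q$ and $Q$ is symmetric positive definite, the substitution $z = Q^{-1}w$ (equivalently $w = Qz$) turns $\langle \mathbb{E}(C_\mathbb{S}C_\mathbb{S}^*)z,z\rangle$ into $\langle D w, w\rangle$; note that $(Qz)_i = p_i^{-1}z_i$, so the right-hand side of~(\ref{ESO}), namely $\sum_i p_i v_i \|z_i\|^2$, becomes $\sum_i p_i v_i p_i^2 \|w_i\|^2 = \sum_i p_i^3 v_i \|w_i\|^2$. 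Hence the existence of ESO parameters $v_i$ satisfying~(\ref{vi}) is equivalent to the existence of constants $\lambda_i := p_i^3 v_i$ (with $v_i < p_i$, i.e.\ $\lambda_i < p_i^4$) such that $\langle D w, w\rangle \le \sum_i \lambda_i \|w_i\|^2$ for all $w\in Y$.

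Next I would unwind the "only if" direction. If $\|D\| < 1$ then $\langle D w, w\rangle \le \|D\|\,\|w\|^2 = \|D\|\sum_i \|w_i\|^2$, so one may take $\lambda_i := \|D\|\,p_i^4$ uniformly, which gives $v_i = \lambda_i / p_i^3 = \|D\|\,p_i < p_i$ since $\|D\| < 1$. For the converse "if" direction, suppose ESO parameters with $v_i < p_i$ exist. Setting $c := \max_i v_i/p_i < 1$, the ESO bound gives $\mathbb{E}\|C_\mathbb{S}^*z\|^2 \le \sum_i p_i v_i \|z_i\|^2 \le c\sum_i p_i^2\|z_i\|^2 = c\,\langle Q^{-1}z, z\rangle$ — here I use $(Q^{-1}z)_i = p_i z_i$, so $\langle Q^{-1}z, z\rangle = \sum_i p_i\|z_i\|^2$; wait, this needs the factor $p_i^2$, so more carefully $\sum_i p_i^2\|z_i\|^2 = \langle Q^{-1}z, Q^{-1}z\rangle_{?}$ — I would instead just track it directly as $\sum_i p_i^2 \|z_i\|^2 = \|Q^{-1}z\|^2$. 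Thus $\langle \mathbb{E}(C_\mathbb{S}C_\mathbb{S}^*)z, z\rangle \le c\,\|Q^{-1}z\|^2$ for all $z$, and substituting $z = Qw$ yields $\langle D w, w\rangle \le c\,\|w\|^2$, so $\|D\| \le c < 1$ because $D$ is symmetric positive semi-definite and its norm equals $\sup_{w\ne 0}\langle Dw,w\rangle/\|w\|^2$.

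The one genuinely delicate point — and the step I expect to require the most care — is the passage between the "for all $i$" form of the ESO condition~(\ref{vi}) and the norm bound $\|D\|<1$: the ESO parameters are allowed to be coordinate-dependent, whereas $\|D\|$ is a single scalar, so the equivalence hinges on the fact that a diagonal upper bound $\langle Dw,w\rangle \le \sum_i \lambda_i\|w_i\|^2$ with all $\lambda_i$ in the relevant range can always be coarsened to a uniform bound and, conversely, that a uniform bound trivially yields a diagonal one. The argument above handles both directions by exhibiting explicit choices ($\lambda_i = \|D\|\,p_i^4$ one way, $c = \max_i v_i/p_i$ the other), using only that $Q$ is symmetric positive definite with $(Qz)_i = p_i^{-1}z_i$ and that $D$ is symmetric positive semi-definite so that $\|D\| = \sup_{w\ne0}\langle Dw,w\rangle/\|w\|^2$. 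I would also remark that the block representation~(\ref{PCC}) of $D$ is consistent with this: expanding $\mathbb{E}(C_\mathbb{S}C_\mathbb{S}^*)$ coordinatewise gives $(i,j)$ block $\mathbb{P}(i\in\mathbb{S},\,j\in\mathbb{S})\,C_iC_j^* = p_{ij}C_iC_j^*$, and conjugating by $Q$ multiplies the $(i,j)$ block by $p_i^{-1}p_j^{-1}$, recovering $D_{ij} = (p_{ij}/(p_ip_j))\,C_iC_j^*$; this is not logically needed for the lemma but makes the role of $Q$ transparent.
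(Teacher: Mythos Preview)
Your approach is the same as the paper's --- relate the ESO quadratic form to $\langle Dw,w\rangle$ via the change of variable $z\leftrightarrow Qw$ --- but you have the direction of that substitution reversed in your opening paragraph. Since $D = Q\,\mathbb{E}(C_\mathbb{S}C_\mathbb{S}^*)\,Q$, one has $\langle Dw,w\rangle = \langle \mathbb{E}(C_\mathbb{S}C_\mathbb{S}^*)\,Qw,\,Qw\rangle$, so the correct substitution is $z = Qw$ (exactly what you use later in the ``if'' direction), not $z = Q^{-1}w$. With the correct direction, $z_i = p_i^{-1}w_i$, so $\sum_i p_iv_i\|z_i\|^2 = \sum_i (v_i/p_i)\|w_i\|^2$, and the natural auxiliary constants are $\lambda_i = v_i/p_i$ with constraint $\lambda_i<1$, not $\lambda_i = p_i^{3}v_i$ with $\lambda_i<p_i^{4}$.

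This reversal breaks your ``only if'' direction as written: from $\langle Dw,w\rangle \le \|D\|\sum_i\|w_i\|^2$ you cannot deduce $\langle Dw,w\rangle \le \sum_i \|D\|\,p_i^{4}\|w_i\|^2$, because $p_i^{4}\le 1$ makes the right-hand side \emph{smaller}, not larger. With the corrected substitution the step is immediate --- take $\lambda_i=\|D\|<1$, i.e.\ $v_i=\|D\|p_i$, which is precisely the paper's choice. Your ``if'' direction, on the other hand, is correct and in fact slightly tighter than the paper's: by introducing $c=\max_i v_i/p_i<1$ you obtain $\langle Dw,w\rangle \le c\|w\|^2$ and hence $\|D\|\le c<1$, whereas the paper only records the strict pointwise inequality $\langle Dz,z\rangle<\|z\|^2$, which in an infinite-dimensional Hilbert space does not by itself force the operator norm to be strictly below~$1$.
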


\begin{proof} 
Let $\|D\|<1$. Then
\begin{equation*} 
\begin{aligned}
    \mathbb{E} \|C^*_\mathbb{S} z\|^2 
    & = \langle z,\mathbb{E}(C_\mathbb{S}C_\mathbb{S}^*)z \rangle  = \langle Q^{-1}z,DQ^{-1}z \rangle \\ & \leq \|D\| \|Q^{-1}z\|^2 = \|D\| \sum_{i=1}^n p_i^2 \|z_i\|^2.
\end{aligned}
\end{equation*}
hence~(\ref{ESO}) and~(\ref{vi}) are satisfied by choosing
$$v_i = \|D\|p_i.$$
Conversely, let $v_i$ satisfy~(\ref{ESO}) and~(\ref{vi}), then
\begin{equation*}
\begin{aligned}
    \langle Dz,z \rangle &= \langle \mathbb{E}\left( C_\mathbb{S}C_\mathbb{S}^* \right)Qz , Qz\rangle = \mathbb{E} \|C^*_\mathbb{S} Qz\|^2 \\ & \leq \sum_{i=1}^n p_iv_i\|p_i^{-1} z_i\|^2 \\ & < \sum_{i=1}^n p_i^2\|p_i^{-1} z_i\|^2 = \|z\|^2
\end{aligned}
\end{equation*}
and using the fact that $D$ is symmetric and positive semidefinite, the left-hand side becomes $\langle Dz,z \rangle = \langle{D}^{1/2}z,D^{1/2} z\rangle = \|D^{1/2}z\|^2 $, which proves $\|D^{1/2}\|<1$, and thus $\|D\|<1$.
\end{proof}

Clearly, the step size parameters $\tau,\sigma_i$ that satisfy~(\ref{assu_i}) are not unique. In particular, if we assume the step size parameters to be uniform, i.e.~$\sigma_i=\sigma$ for all $i$, then we write
$ D = \tau\sigma Q\mathbb{E}(A_{\mathbb{S}}A_\mathbb{S}^*)Q $
and thus it suffices to choose $\tau,\sigma$ such that
\begin{equation} \label{tausig}
    \tau\sigma \|Q\mathbb{E}(A_{\mathbb{S}}A_\mathbb{S}^*)Q\| < 1.
\end{equation}
%

In Section~\ref{sec:step}, we will see examples on how to find optimal step sizes that comply with this condition for specific types of random samplings.

\subsection{Proof of Theorem~\ref{main}} \label{sec:ske}

The two following propositions lay out the proof of Theorem~\ref{main}. We use the notation $w^k = (x^k,y^k)$.

\begin{proposition} \label{tozero1}
Let $\theta=1$ and $(w^k)_{k\in\mathbb{N}}$ a random sequence generated by Algorithm~\ref{alg_2} under Assumption~\ref{assu} and step size condition~(\ref{assu_i}). The following assertions hold:
\begin{enumerate}[i]
    \item \label{i} The sequence $(w^{k+1}-w^{k})_{k\in\mathbb{N}}$ converges a.s.\ to zero.
    \item \label{iii} The sequence $(\|w^k-\hat{w}\|)_{k\in\mathbb{N}}$ converges a.s.\ {(not necessarily to zero)} for every saddle point $\hat{w}$.
    \item \label{iv}  If every weak cluster point of $(w^k)_{k\in\mathbb{N}}$ is a.s.~a saddle point, the sequence $(w^k)_{k\in\mathbb{N}}$ converges weakly a.s.\ to a saddle point. 
\end{enumerate}
\end{proposition}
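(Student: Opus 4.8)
The plan is to establish a descent-type (quasi-Fejér) inequality for the quantity $\|w^k - \hat w\|^2$ measured in an appropriate metric, and then to invoke a stochastic quasi-Fejér / Robbins--Siegmund argument together with a separability-based diagonal argument. Concretely, for a saddle point $\hat w = (\hat x, \hat y)$ I would introduce the weighted norm associated with the block-diagonal operator built from $\tau^{-1}\mathrm{Id}_X$ and $\sigma_i^{-1} Q^{-1}$ on $Y$ (the natural metric for which the SPDHG step is a proximal step), and compute $\mathbb{E}_k[\Phi(w^{k+1})]$ where $\Phi$ is this squared distance. The step-size condition $\|D\|<1$ (equivalently, via Lemma~\ref{BESO}, the ESO bound $v_i<p_i$) is exactly what is needed to absorb the cross terms coming from the randomized extrapolation $\bar z^{k+1}$, so that one obtains
\begin{equation*}
    \mathbb{E}_k\big[\Phi(w^{k+1})\big] \le \Phi(w^k) - c\,\mathbb{E}_k\big[\|w^{k+1}-w^k\|^2\big]
\end{equation*}
for some $c>0$, using convexity of $g$ and $f_i^*$ and the firm nonexpansiveness of the proximity operators to control the remaining terms. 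This single inequality already drives all three claims.

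For \ref{i} and \ref{iii}: the displayed inequality shows $(\Phi(w^k))_{k}$ is a nonnegative supermartingale-type sequence, so by the Robbins--Siegmund lemma it converges a.s.\ and $\sum_k \mathbb{E}_k\|w^{k+1}-w^k\|^2 < \infty$ a.s.; in particular $\|w^{k+1}-w^k\| \to 0$ a.s., giving~\ref{i}, and since $\Phi$ is equivalent to $\|\cdot-\hat w\|^2$ the convergence of $\Phi(w^k)$ gives~\ref{iii}. The one subtlety is that the exceptional null set a priori depends on $\hat w$, so to get a \emph{single} full-measure event valid for all saddle points simultaneously I would use separability of $X\times Y$: pick a countable dense subset of the (closed, convex) solution set, intersect the countably many full-measure events, and extend to the whole solution set by continuity of $\hat w \mapsto \|w^k - \hat w\|$ (this is the standard trick in the Fejér-monotone setting, and it is also where Assumption~\ref{assu0} is used).

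For \ref{iv}: assuming every weak cluster point of $(w^k)$ is a.s.\ a saddle point, one argues on the full-measure event above. Boundedness of $(w^k)$ (from~\ref{iii}) gives existence of weak cluster points; Opial's lemma in Hilbert space then upgrades "$\|w^k-\hat w\|$ converges for every saddle point $\hat w$" plus "every weak cluster point is a saddle point" to weak convergence of the whole sequence to a single saddle point. I expect the main obstacle to be the verification of the descent inequality with the right constant — in particular, handling the randomized extrapolation term cleanly requires rewriting $\bar z^{k+1}-z^{k+1}=A^*Q(y^{k+1}-y^k)$, conditioning on $\mathcal F_k$, and exploiting that $\mathbb{E}_k$ turns the $p_i^{-1}$ weights into the operator $D$, so that $\|D\|<1$ yields a strict contraction factor; the separability/null-set bookkeeping in \ref{iii} and the Opial argument in \ref{iv} are then comparatively routine. (These two propositions presumably split exactly along this line: Proposition~\ref{tozero1} does the Fejér/quasi-Fejér analysis, and the second, unstated proposition verifies the cluster-point hypothesis needed in~\ref{iv}.)
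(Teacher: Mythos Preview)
Your high-level architecture (a descent inequality, Robbins--Siegmund, then separability plus Opial) matches the paper's, and part~\ref{iv} is handled exactly as you describe, via the Combettes--Pesquet machinery. The gap is in the Lyapunov function you propose for parts~\ref{i} and~\ref{iii}.

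A pure weighted distance $\Phi(w^k)=\|x^k-\hat x\|^2_{\tau^{-1}}+\|y^k-\hat y\|^2_{QS^{-1}}$ does \emph{not} satisfy the supermartingale inequality you write, because the $x^{k+1}$-update uses the extrapolation $\bar z^k = A^*y^k + A^*Q(y^k-y^{k-1})$, which depends on the \emph{previous} sample $\mathbb{S}^{k-1}$ and is therefore already $\mathcal F_k$-measurable: conditioning on $\mathcal F_k$ does not average it into $D$. The resulting cross term $\langle QA(x^{k}-\hat x), y^{k-1}-y^{k}\rangle$ cannot be absorbed by $\|D\|<1$ alone, so your displayed inequality with a strictly positive $c$ is not available.

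The paper's remedy is to fold this increment into the Lyapunov function itself, setting
\[
\Delta^k_{\hat w} \;=\; V(x^k-\hat x,\,y^{k-1}-y^k) + \|y^k-\hat y\|^2_{QS^{-1}},
\]
with $V(x,y)=\|x\|^2_{\tau^{-1}}+2\langle QAx,y\rangle+\|y\|^2_{QS^{-1}}$. This yields (Lemma~\ref{ala}) the one-step bound $\Delta^k_{\hat w}\ge \mathbb{E}^{k+1}\Delta^{k+1}_{\hat w}+V(x^{k+1}-x^k,y^k-y^{k-1})$, but now neither $\Delta^k_{\hat w}$ nor the residual $V(\cdot,\cdot)$ is nonnegative, so Robbins--Siegmund does not apply directly and your one-shot argument collapses. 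Instead the paper proceeds in two stages: first take \emph{full} expectation and use $\|D\|<1$ (Lemma~\ref{lemma4.2}) to lower-bound $\mathbb{E}\,V(\cdot,\cdot)$ by a positive multiple of $\mathbb{E}\,d^k$, telescope, and deduce $d^k\to 0$ a.s.\ (part~\ref{i}); then, \emph{using} part~\ref{i}, show $\Delta^k_{\hat w}$ is a.s.\ bounded below (Lemma~\ref{n/c}), shift it to a nonnegative $\alpha^k$, move the sign-indefinite cross term into a summable $\eta^k$, and only then apply Robbins--Siegmund to obtain part~\ref{iii}. So in the actual proof part~\ref{i} is a prerequisite for part~\ref{iii}, not a by-product of it as in your plan.
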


The key idea of the proof consists in rewriting Algorithm~\ref{alg_2} as a sequence of operators $T_\mathcal{S}$ which depend on the random subsets $\mathcal{S}$. 
To show this, denote 
\begin{equation*}
    w=(w_0,w_1,...,w_n)=(x,y_1,...,y_n)
\end{equation*}
and, for every set $\mathcal{S}\subset \{1,...,n\}$, let the operator $T_\mathcal{S}:X\times Y\to X\times Y$ be defined by
\begin{align*}
&(T_\mathcal{S}w)_0  =\\
& \mathrm{prox}_{\tau g} \Big( x-\tau A^*y - 
\sum_{j\in\mathcal{S}} \frac{1+p_j}{p_j} 
\tau A_j^*((T_\mathcal{S} w)_j-y_j) \Big)
\end{align*}
and, for $i\in\{1,...,n\}$,
\begin{equation*} 
(T_\mathcal{S}w)_i = \begin{cases}
\mathrm{prox}_{\sigma_i f_i^*}(y_i+\sigma_iA_ix)
& \text{if } i\in\mathcal{S} \\
y_i & \text{else}. 
\end{cases}
\end{equation*}

\begin{proposition}
\label{cluster_saddle}
Let $\theta=1$ and $(w^k)_{k\in\mathbb{N}}$ a random sequence generated by Algorithm~\ref{alg_2} under Assumption~\ref{assu} and step size condition~(\ref{assu_i}). The following assertions hold:
\begin{enumerate}[i]
    \item \label{2i}
    The iterates $w^k=(x^k,y^k)$ satisfy, for every $k$,
    \begin{equation} \label{itTs}
        T_{\mathbb{S}^k}(x^{k+1},y^{k}) = (x^{k+2},y^{k+1}) .
    \end{equation}
    \item \label{2ii}
    A point $\hat{w}\in X\times Y$ is a saddle point if and only if it is a fixed point of $T_{\mathbb{S}^k}$ for every instance of $\mathbb{S}^k$.
    \item \label{2iii}
    Every weak cluster point of $(w^k)_{k\in\mathbb{N}}$ is a.s.~a saddle point.
\end{enumerate}
\end{proposition}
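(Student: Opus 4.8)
The plan is to prove the three assertions in order; (\ref{2i}) and (\ref{2ii}) are essentially bookkeeping, and the substance lies in (\ref{2iii}). For (\ref{2i}) I would first record, by induction on $k$, that the auxiliary variable of Algorithm~\ref{alg_2} satisfies $z^k=A^*y^k$: this holds at $k=0$ by initialization, and since $y_i^{k+1}=y_i^k$ for $i\notin\mathbb{S}^k$ the update gives $z^{k+1}=z^k+\sum_{i\in\mathbb{S}^k}A_i^*(y_i^{k+1}-y_i^k)=A^*y^k+A^*(y^{k+1}-y^k)=A^*y^{k+1}$. With $\theta=1$ this identifies the extrapolation as
\[
\bar z^{k+1}=A^*y^k+\sum_{j\in\mathbb{S}^k}\tfrac{1+p_j}{p_j}A_j^*(y_j^{k+1}-y_j^k).
\]
Evaluating $T_{\mathbb{S}^k}$ at $(x^{k+1},y^k)$, the $i$-th dual component equals $\mathrm{prox}_{\sigma_i f_i^*}(y_i^k+\sigma_i A_i x^{k+1})=y_i^{k+1}$ for $i\in\mathbb{S}^k$ and $y_i^k=y_i^{k+1}$ otherwise, so the dual block is $y^{k+1}$; and the primal block is $\mathrm{prox}_{\tau g}(x^{k+1}-\tau\bar z^{k+1})=x^{k+2}$ by the displayed identity. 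This is (\ref{itTs}).

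For (\ref{2ii}) I would invoke Fermat's rule: $\hat w=(\hat x,\hat y)$ solves~(\ref{saddle}) if and only if $\hat x=\mathrm{prox}_{\tau g}(\hat x-\tau A^*\hat y)$ and $\hat y_i=\mathrm{prox}_{\sigma_i f_i^*}(\hat y_i+\sigma_i A_i\hat x)$ for every $i$. If $\hat w$ is a saddle point then $(T_\mathcal{S}\hat w)_i=\hat y_i$ for all $i$, so the correction term in $(T_\mathcal{S}\hat w)_0$ vanishes and $(T_\mathcal{S}\hat w)_0=\mathrm{prox}_{\tau g}(\hat x-\tau A^*\hat y)=\hat x$; hence $\hat w$ is a fixed point of $T_\mathcal{S}$ for every $\mathcal{S}$. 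Conversely, if $\hat w$ is a fixed point of $T_\mathcal{S}$ for every value $\mathcal{S}$ attained by $\mathbb{S}^k$, then properness places each index $i$ in some such $\mathcal{S}$, and the fixed-point identity for that $\mathcal{S}$ yields the $i$-th dual optimality condition; with all dual conditions in force the correction term vanishes once more and the primal fixed-point identity yields the primal optimality condition, so $\hat w$ solves~(\ref{saddle}).

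The obstacle is (\ref{2iii}): the operator $T_{\mathbb{S}^k}$ in (\ref{itTs}) varies with $k$, so one cannot simply pass to a weak limit along a single subsequence. I would write $u^k:=(x^{k+1},y^k)$, so that (\ref{itTs}) reads $u^{k+1}=T_{\mathbb{S}^k}u^k$; by Proposition~\ref{tozero1}(\ref{i}) we have $u^{k+1}-u^k\to0$ a.s., and since $u^k-w^k\to0$ a.s.\ the sequences $(u^k)$ and $(w^k)$ have the same weak cluster points, which exist a.s.\ because $(w^k)$ is bounded by Proposition~\ref{tozero1}(\ref{iii}). For each of the finitely many $\mathcal{S}$ in the support of the sampling set $g_\mathcal{S}(w):=\|T_\mathcal{S}w-w\|$; by Assumption~\ref{assu}(\ref{assu3}) each $T_\mathcal{S}$ is weakly sequentially continuous, hence $g_\mathcal{S}$ is weakly sequentially lower semicontinuous with $g_\mathcal{S}^{-1}(\{0\})=\mathrm{Fix}(T_\mathcal{S})$, and — the key observation — $g_\mathcal{S}(u^m)=\|u^{m+1}-u^m\|$ whenever $\mathbb{S}^m=\mathcal{S}$. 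Suppose, towards a contradiction, that with positive probability some weak cluster point $\hat w$ of $(u^m)$ satisfies $g_\mathcal{S}(\hat w)>0$. By weak lower semicontinuity there is then a rational $\epsilon>0$ with $g_\mathcal{S}(u^m)>2\epsilon$ for infinitely many $m$, whereas, since $\|u^{m+1}-u^m\|\to0$, the event $g_\mathcal{S}(u^m)>2\epsilon$ forces $\mathbb{S}^m\neq\mathcal{S}$ for all large $m$. Now $u^m$ is measurable with respect to $\mathcal{F}_m:=\sigma(\mathbb{S}^0,\dots,\mathbb{S}^{m-1})$, so $\{g_\mathcal{S}(u^m)>2\epsilon\}\in\mathcal{F}_m$, while $\mathbb{P}(\mathbb{S}^m=\mathcal{S}\mid\mathcal{F}_m)=\mathbb{P}(\mathbb{S}=\mathcal{S})>0$ because the samples are i.i.d.; the conditional (L\'evy) Borel--Cantelli lemma then forces $\{g_\mathcal{S}(u^m)>2\epsilon\}$ and $\{\mathbb{S}^m=\mathcal{S}\}$ to occur simultaneously for infinitely many $m$ a.s., a contradiction. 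A union bound over the rationals $\epsilon$ and the finitely many admissible $\mathcal{S}$ then shows that a.s.\ every weak cluster point of $(w^k)$ lies in $\bigcap_\mathcal{S}\mathrm{Fix}(T_\mathcal{S})$, and hence is a saddle point by (\ref{2ii}). I expect this conditional Borel--Cantelli step — reconciling the path-dependence of the cluster point with the independence of the samples — to be the one genuinely delicate point; everything else is routine.
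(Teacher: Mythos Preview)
Your arguments for parts~(\ref{2i}) and~(\ref{2ii}) match the paper's. For~(\ref{2iii}) your proof is correct but follows a genuinely different route.

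The paper avoids the Borel--Cantelli detour by establishing the stronger fact that $T_\mathcal{S}u^k-u^k\to0$ a.s.\ along the \emph{full} sequence, for every $\mathcal{S}$ with $p_\mathcal{S}>0$. This comes from the conditional-expectation identity
\[
\mathbb{E}\bigl[\|u^{k+1}-u^k\|^2 \,\big|\, \mathcal{F}_k\bigr]=\sum_{\mathcal{S}}p_\mathcal{S}\,\|T_\mathcal{S}u^k-u^k\|^2,
\]
combined with the summability $\sum_k\mathbb{E}\|u^{k+1}-u^k\|^2<\infty$ already obtained inside the proof of Proposition~\ref{tozero1}(\ref{i}) (not merely the a.s.\ statement $u^{k+1}-u^k\to0$). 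Taking expectations and using Tonelli gives $\sum_k\|T_\mathcal{S}u^k-u^k\|^2<\infty$ a.s.\ for each $\mathcal{S}$. Once $T_\mathcal{S}u^k-u^k\to0$ a.s., any weakly convergent subsequence $u^{\ell_k}\rightharpoonup w^*$ also satisfies $T_\mathcal{S}u^{\ell_k}\rightharpoonup w^*$, while weak sequential continuity of $T_\mathcal{S}$ gives $T_\mathcal{S}u^{\ell_k}\rightharpoonup T_\mathcal{S}w^*$; uniqueness of weak limits then forces $T_\mathcal{S}w^*=w^*$.

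What each route buys: the paper's argument is shorter and yields the ancillary full-sequence convergence $T_\mathcal{S}u^k-u^k\to0$, but it reaches back into the proof of Proposition~\ref{tozero1}(\ref{i}) for the $L^1$-summability rather than using only its stated conclusion. Your conditional Borel--Cantelli argument treats Proposition~\ref{tozero1}(\ref{i}) as a black box (only $u^{k+1}-u^k\to0$ a.s.\ is needed) and makes the interplay between the path-dependent cluster points and the i.i.d.\ sampling fully explicit; the price is the extra machinery of the L\'evy extension and the union bound over rational $\epsilon$.
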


\begin{proof}[Proof of Theorem \ref{main}]
By Proposition~\ref{cluster_saddle}-\ref{2iii}, every weak cluster point of $(w^k)_{k\in\mathbb{N}}$  is almost surely a saddle point and, by Proposition~\ref{tozero1}-\ref{iv}, the sequence $(w^k)_{k\in\mathbb{N}}$ converges weakly almost surely to a saddle point. 
\end{proof}

\subsection{Proof of Propositions \ref{tozero1} \& \ref{cluster_saddle}} \label{sec:conv}

We use the notation $\|x\|^2_T=\langle Tx,x\rangle$ for any operator $T$. 
We also denote 
\begin{equation*}
    V(x,y) := \|x\|^{2}_{\tau^{-1}} + 2\langle QAx,y \rangle + \|y\|^{2}_{QS^{-1}}    
\end{equation*}
with $S=\mathrm{diag}(\sigma_1,...,\sigma_n)$ and $A$ given by $(Ax)_i=A_ix$. For any
{$\hat{w}=(\hat{x},\hat{y})$} we write
%
%
\begin{equation*} 
    \Delta^k_{\hat{w}} 
    := V(x^k-\hat{x},y^{k-1}-y^k) + \|y^k-\hat{y}\|^{2}_{QS^{-1}}
\end{equation*}
and the conditional expectation at time $k$ is denoted as 
$\mathbb{E}^{k}(w) = \mathbb{E}(w \,|\, w^0,...,w^{k-1})$.
With this notation we will make use of (\cite{alacaoglu}, Lemma~4.1), which was originally obtained as a consequence of~(\cite{spdhg}, Lemma~4.4).

\begin{lemma}[\cite{alacaoglu}, Lemma 4.1]
\label{ala}
Let $(w^k)_{k\in\mathbb{N}}$ be a random sequence generated by Algorithm~\ref{alg_2} under Assumption~\ref{assu}. Then for every saddle point $\hat{w}$, 
\begin{equation} \label{inequality0}
\begin{aligned}
    \Delta^k_{\hat{w}} \geq &\mathbb{E}^{k+1}(\Delta^{k+1}_{\hat{w}}) \\ &+ V(x^{k+1}-x^k,y^k-y^{k-1}). 
\end{aligned}
\end{equation}
\end{lemma}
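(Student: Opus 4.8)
The estimate is, in essence, the one-step descent inequality of Chambolle et al.\ (\cite{spdhg}, Lemma~4.4) rearranged as in (\cite{alacaoglu}, Lemma~4.1), so one legitimate route is simply to invoke that result; but a self-contained derivation runs as follows. First I would write the first-order optimality conditions of the two proximal steps of Algorithm~\ref{alg_2}. The primal step $x^{k+1}=\mathrm{prox}_{\tau g}(x^k-\tau\bar z^k)$ gives $\tau^{-1}(x^k-x^{k+1})-\bar z^k\in\partial g(x^{k+1})$, hence for all $x$,
$$\langle \tau^{-1}(x^k-x^{k+1})-\bar z^k,\,x-x^{k+1}\rangle \le g(x)-g(x^{k+1}),$$
and for each $i\in\mathbb{S}^k$ the dual step gives the analogous inequality for $f_i^*$ at the point $y_i^k+\sigma_iA_ix^{k+1}$ with $\sigma_i^{-1}$ in place of $\tau^{-1}$; for $i\notin\mathbb{S}^k$ nothing is needed since $y_i^{k+1}=y_i^k$.

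Next I would test these inequalities at the saddle point $\hat w=(\hat x,\hat y)$ and add them, converting each inner product of the form $\langle a-b,\,c-a\rangle$ into squared norms via the polarization identity $2\langle a-b,c-a\rangle=\|b-c\|^2-\|a-b\|^2-\|a-c\|^2$. The saddle-point inclusions $-\sum_iA_i^*\hat y_i\in\partial g(\hat x)$ and $A_i\hat x\in\partial f_i^*(\hat y_i)$ then let me cancel or absorb the residual $g$, $f_i^*$ and bilinear terms, leaving a combination of $\|x^{k+1}-\hat x\|^2_{\tau^{-1}}$, $\|y_i^{k+1}-\hat y_i\|^2_{\sigma_i^{-1}}$, the one-step gaps $\|x^{k+1}-x^k\|^2$, $\|y_i^{k+1}-y_i^k\|^2$, the cross term $\langle A(x^{k+1}-x^k),\,y^{k+1}-\hat y\rangle$, and the extrapolation contribution, which I would rewrite using $\bar z^k=A^*y^k+A^*Q(y^k-y^{k-1})$.

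The decisive step is then to apply the conditional expectation $\mathbb{E}^{k+1}(\cdot)$. Since $\mathbb{S}^k$ is independent of the past and $\mathbb{P}(i\in\mathbb{S}^k)=p_i$, coordinate $i$ is updated with probability $p_i$, and the $p_i^{-1}$-weighting built into $Q$ — which appears simultaneously in $\bar z^k$, in the extrapolation step (here with $\theta=1$), and in the norms $\|\cdot\|_{QS^{-1}}$ defining $V$ and $\Delta^k_{\hat w}$ — is precisely what is needed to turn the sampled quantities into their $Q$-weighted expectations. This recombines the $y^{k+1}$-terms, the cross term and the extrapolation term into the full $V(x^{k+1}-x^k,y^k-y^{k-1})$ and $\|y^{k+1}-\hat y\|^2_{QS^{-1}}$ expressions, and after collecting terms one obtains $\Delta^k_{\hat w}\ge\mathbb{E}^{k+1}(\Delta^{k+1}_{\hat w})+V(x^{k+1}-x^k,y^k-y^{k-1})$.

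I expect the main obstacle to be exactly this conditional-expectation bookkeeping: one must keep straight which iterates are measurable with respect to which $\sigma$-algebra (note that $x^{k+1}$ depends on $\mathbb{S}^{k-1}$ but not on $\mathbb{S}^k$, whereas $y^{k+1}$ depends on $\mathbb{S}^k$), handle the coupling between the sampled $y_i^{k+1}$ and the extrapolated $\bar z^k$, and check that with $\theta=1$ and the $Q$-weights every cross term reassembles into $V$ with no leftover indefinite remainder. Positivity of $V$, which is where the step-size condition $\|D\|<1$ enters, is not needed for this lemma, only downstream. Finally, since every manipulation is either a subgradient inequality or an inner-product identity in a Hilbert space, no finite-dimensionality is used, so the estimate transfers verbatim from the setting of \cite{alacaoglu} to that of Assumption~\ref{assu}.
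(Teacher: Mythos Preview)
The paper does not prove this lemma at all: it simply cites \cite{alacaoglu}, Lemma~4.1, remarking that the result follows from \cite{spdhg}, Lemma~4.4, and that the Bregman-distance terms appearing in the original inequality have been dropped using nonnegativity. Your first sentence already identifies this citation route, which is exactly what the paper does; the remainder of your proposal is a correct outline of the standard derivation behind those references (prox subgradient inequalities, three-point identity, conditional-expectation bookkeeping with the $Q$-weights), so it goes beyond what the paper itself supplies rather than diverging from it.
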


In~(\ref{inequality0}) we have simplified the original lemma by using the fact that Bregman distances of convex functionals are nonnegative \cite{spdhg}. We will also require the following lemma, which is equivalent to (\cite{spdhg}, Lemma~4.2) with $\rho^2=\max_i{v_i}/{p_i}$ and $c=\rho^{-1}$, and where we have replaced the ESO step size condition~(\ref{vi}) with the equivalent condition~(\ref{assu_i}).

\begin{lemma}[\cite{spdhg}, Lemma 4.2]
\label{lemma4.2}
Let $D$ be defined as in~(\ref{B}) such that $\|D\|=\rho^2<1$ and let $(y^k)_{k\in\mathbb{N}}$ be obtained through Algorithm \ref{alg_2}. Then for all $x\in X$, $k\in\mathbb{N}$,
\begin{equation*}
\begin{aligned}
    &\mathbb{E}^k V(x,y^k-y^{k-1}) \\  &\geq (1-\rho) \mathbb{E}^k\big(\|x\|^{2}_{\tau^{-1}} + \|y^k-y^{k-1}\|^{2}_{QS^{-1}} \big).
\end{aligned}
\end{equation*}
\end{lemma}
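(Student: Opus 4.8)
The plan is to express $V(x,y^k-y^{k-1})$ in terms of the quantities $\|x\|^2_{\tau^{-1}}$, $\|y^k-y^{k-1}\|^2_{QS^{-1}}$ and a cross term involving $\langle QA x, y^k-y^{k-1}\rangle$, and then control that cross term in conditional expectation using the definition of $D$ and Young's inequality. First I would recall that, conditionally on the past, the increment $y^k - y^{k-1}$ is supported on the random index set $\mathbb{S}^{k-1}$; more precisely $y^k_i - y^{k-1}_i = 0$ for $i\notin\mathbb{S}^{k-1}$, and for $i\in\mathbb{S}^{k-1}$ the update depends on $x^k$, which is $w^k$-measurable, hence constant under $\mathbb{E}^k$. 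The key observation is that, by the definition of the proximal step and a standard firm-nonexpansiveness / monotonicity estimate (exactly as in the derivation of (\cite{spdhg}, Lemma~4.2)), one has pointwise in $\omega$ a lower bound of the form $2\langle QAx, y^k-y^{k-1}\rangle \ge -\,(\text{a multiple of}) \|C^*_{\mathbb{S}^{k-1}} Q(\dots)\|$, which is where the operator $C_\mathbb{S}C_\mathbb{S}^*$ enters.

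The concrete steps, in order: (1) write out $V(x,y^k-y^{k-1}) = \|x\|^2_{\tau^{-1}} + 2\langle QAx, y^k-y^{k-1}\rangle + \|y^k-y^{k-1}\|^2_{QS^{-1}}$; (2) bound the middle term below by $-\alpha \|x\|^2_{\tau^{-1}} - \alpha^{-1}\|QAx\|^2_{\text{(appropriate weight)}}$-type Young inequality, but done carefully with the change of variables $x \mapsto \tau^{1/2}x$ and $y \mapsto S^{-1/2}y$ so that the relevant operator becomes $C_\mathbb{S}$; (3) take $\mathbb{E}^k$ and use that $\mathbb{E}^k \|C^*_{\mathbb{S}^{k-1}}(\cdot)\|^2 = \langle \mathbb{E}(C_\mathbb{S}C_\mathbb{S}^*)(\cdot),(\cdot)\rangle$, together with $D = Q\,\mathbb{E}(C_\mathbb{S}C_\mathbb{S}^*)\,Q$ and $\|D\| = \rho^2$, to replace the cross term's contribution by something controlled by $\rho\,\|y^k-y^{k-1}\|^2_{QS^{-1}}$; (4) choose the Young parameter $\alpha = \rho$ so that the two leftover negative pieces are $-\rho\|x\|^2_{\tau^{-1}}$ and $-\rho\|y^k-y^{k-1}\|^2_{QS^{-1}}$, yielding the claimed factor $(1-\rho)$ in front of both squared norms.

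I would expect the main obstacle to be step~(2)–(3): getting the cross term $2\langle QAx, y^k-y^{k-1}\rangle$ into a form where exactly the operator $\mathbb{E}(C_\mathbb{S}C_\mathbb{S}^*)$, sandwiched by $Q$, appears, rather than some other weighting of $A_\mathbb{S}$. This requires tracking the $\tau^{1/2}$ and $\sigma_i^{1/2}$ factors correctly through the definitions $C_i = \tau^{1/2}\sigma_i^{1/2}A_i$ and $(Qy)_i = p_i^{-1}y_i$, and recognizing that the natural norm on the $y$-increment is $\|\cdot\|^2_{QS^{-1}}$ while the natural pairing produced by the update rule carries an extra $Q$. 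The rest — applying $\mathbb{E}^k$, invoking $\|D\|=\rho^2$, and optimizing the Young parameter — is routine. Since the excerpt explicitly states this lemma is "equivalent to (\cite{spdhg}, Lemma~4.2) with $\rho^2 = \max_i v_i/p_i$ and $c = \rho^{-1}$", an alternative and shorter route is simply to invoke that result verbatim after substituting the ESO parameters $v_i = \rho^2 p_i$ guaranteed by Lemma~\ref{BESO}, which turns the original bound's constant $c^{-1}(1 - c^2\max_i v_i/p_i) = \rho(1-\rho^2)/\rho \cdot (1/\rho)$-style factor into precisely $(1-\rho)$; I would present this reduction as the main line of the proof and relegate the self-contained Young-inequality argument to a remark.
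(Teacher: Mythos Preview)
The paper gives no proof of this lemma beyond the sentence preceding it: it is recorded as \cite{spdhg}, Lemma~4.2 with the substitutions $\rho^2=\max_i v_i/p_i$ and $c=\rho^{-1}$, the ESO parameters $v_i=\|D\|p_i$ being supplied by Lemma~\ref{BESO}. This is precisely the reduction you describe in your final paragraph, so your proposed main line coincides with the paper's.

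Your self-contained sketch is also essentially correct and a welcome addition, but drop the reference to firm nonexpansiveness and the proximal step: $x$ is an arbitrary element of $X$, not an iterate, and the bound is purely algebraic. The clean route is to use that $y^k-y^{k-1}$ is supported on $\mathbb{S}^{k-1}$ to rewrite $\langle QAx,\,y^k-y^{k-1}\rangle=\langle \tau^{-1/2}x,\,C_{\mathbb{S}^{k-1}}^*QS^{-1/2}(y^k-y^{k-1})\rangle$, apply Young with parameter $c=\rho^{-1}$, and then observe that conditionally on $\mathcal{F}_{k-1}$ the increment equals a deterministic vector $\Delta$ restricted to $\mathbb{S}^{k-1}$, so that $\mathbb{E}^k\|C_{\mathbb{S}^{k-1}}^*QS^{-1/2}(y^k-y^{k-1})\|^2=\langle D\,S^{-1/2}\Delta,\,S^{-1/2}\Delta\rangle\le\rho^2\|S^{-1/2}\Delta\|^2=\rho^2\,\mathbb{E}^k\|y^k-y^{k-1}\|^2_{QS^{-1}}$; this gives the factor $(1-\rho)$ on both terms exactly as you claim.
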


\begin{proof}[Proof of Proposition \ref{tozero1}-\ref{i}]
Let $d^k$ denote
$$ d^k := \|x^{k+1}-x^{k}\|^{2}_{\tau^{-1}} + \|y^{k}-y^{k-1}\|^{2}_{QS^{-1}}. $$
Taking the expectation in~(\ref{inequality0}) and applying Lemma~\ref{lemma4.2} yields
\begin{equation*}
\begin{aligned}
    \mathbb{E}(\Delta^k_{\hat{w}}) \geq \mathbb{E}(\Delta^{k+1}_{\hat{w}}) 
    + (1-\rho)\mathbb{E}(d^k) .
\end{aligned}
\end{equation*}
Taking the sum from $k=0$ to $k=N-1$ gives 
\begin{equation} \label{sum}
\begin{aligned}
    \Delta_{\hat{w}}^0 \geq \mathbb{E}(\Delta^{N}_{\hat{w}}) 
    + (1-\rho)\sum_{k=0}^{N-1} \mathbb{E}(d^k)
\end{aligned}
\end{equation}
where we have used $y^{-1}:=y^0$. From Lemma~\ref{lemma4.2} we know 
$\mathbb{E}(\Delta^N_{\hat{w}})\geq0$, hence taking the limit as $N\to\infty$ in (\ref{sum}) yields $\sum_{k=0}^\infty \mathbb{E}(d^k) < \infty$. By the monotone convergence theorem, this is equivalent to 
\begin{equation} \label{ntozero}
     \mathbb{E}\Big( \sum_{k=0}^\infty  
     d^k
     \Big) < \infty,
\end{equation}
which implies
\begin{equation}\label{sum(y^n)} 
    \sum_{k=0}^\infty 
    d^k
    < \infty \;\textrm{a.s.}
\end{equation} 
and it follows that
\begin{equation}\label{yto0}
   d^k
   \to0 \text{ a.s.}
\end{equation}
Since the norms $\|\cdot\|_{\tau^{-1}}$ and $\|\cdot\|_{QS^{-1}}$ are equivalent to the usual norms in $X$ and $Y$, we deduce the sequence $(w^{k+1}-w^k)_{k\in\mathbb{N}}$ converges to zero almost surely.
\end{proof}

For the next part we require a slight variation of Lemma~\ref{lemma4.2}.

\begin{lemma} \label{n/c}
Let $\varphi = \max_{i}{\|C_i\|}/{\sqrt{p_i}}$ and let $(y^k)_{k\in\mathbb{N}}$ be the iterates defined by Algorithm~1. For all $c>0$, $x\in X$ and $k\in\mathbb{N}$, there holds
\begin{equation*}
\begin{aligned}
    &V(x,y^k-y^{k-1}) \\ &\geq (1-\frac{n\varphi}{c}) \|x\|_{\tau^{-1}}^2 + (1-\varphi c)\|y^{k}-y^{k-1}\|_{QS^{-1}}^2 .
\end{aligned}
\end{equation*}
\end{lemma}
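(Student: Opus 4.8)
The plan is to expand $V(x, y^k - y^{k-1})$ from its definition and bound the cross term $2\langle QAx, y^k - y^{k-1}\rangle$ below using Young's inequality, so that the negative part gets absorbed into the two squared-norm terms, with a free parameter $c>0$ controlling the split. First I would write, with $u := y^k - y^{k-1}$,
\begin{equation*}
V(x,u) = \|x\|^2_{\tau^{-1}} + 2\langle QAx, u\rangle + \|u\|^2_{QS^{-1}},
\end{equation*}
and then estimate $|2\langle QAx, u\rangle|$. The natural move is to rewrite the pairing in terms of the operators $C_i = \tau^{1/2}\sigma_i^{1/2} A_i$: since $(QAx)_i = p_i^{-1} A_i x$ and $(QS^{-1})$ acts as multiplication by $p_i^{-1}\sigma_i^{-1}$ on block $i$, one has
\begin{equation*}
\langle QAx, u\rangle = \sum_{i=1}^n p_i^{-1} \langle A_i x, u_i\rangle = \sum_{i=1}^n p_i^{-1} \tau^{-1/2}\sigma_i^{-1/2}\langle C_i x, u_i\rangle,
\end{equation*}
so that $|2\langle QAx,u\rangle| \le 2\sum_i p_i^{-1}\tau^{-1/2}\sigma_i^{-1/2}\|C_i\|\,\|x\|\,\|u_i\|$. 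Applying Young's inequality $2ab \le c^{-1}a^2 + c b^2$ termwise, with $a = \tau^{-1/2}\|x\|$ and $b = p_i^{-1}\sigma_i^{-1/2}\|C_i\|\,\|u_i\|$ (or a symmetric variant), produces one contribution proportional to $\|x\|^2_{\tau^{-1}}$ summed $n$ times and one proportional to $\|u_i\|^2_{p_i^{-2}\sigma_i^{-1}}$.

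The key bookkeeping step is to check that the constants line up with $\varphi = \max_i \|C_i\|/\sqrt{p_i}$. After Young, the $\|x\|^2_{\tau^{-1}}$ coefficient becomes a sum over $i$ of terms each bounded by $c^{-1}\|C_i\|^2/(p_i^2\sigma_i) \cdot (\text{something})$; the clean way is to pull out $\|C_i\|^2/p_i \le \varphi^2 p_i$ at the right moment, which turns the $x$-side into $\le n\varphi c^{-1}\|x\|^2_{\tau^{-1}}$ and the $u$-side into $\le \varphi c \sum_i p_i^{-1}\sigma_i^{-1}\|u_i\|^2 = \varphi c\,\|u\|^2_{QS^{-1}}$, using that $\|u_i\|^2_{QS^{-1}}$-block weight is $p_i^{-1}\sigma_i^{-1}$ and one extra factor $p_i^{-1}$ from the $Q$ in $QAx$ is what gets matched by $\|C_i\|^2/p_i \le \varphi^2 p_i$. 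Collecting,
\begin{equation*}
V(x,u) \ge \left(1 - \tfrac{n\varphi}{c}\right)\|x\|^2_{\tau^{-1}} + (1-\varphi c)\|u\|^2_{QS^{-1}},
\end{equation*}
which is the claim.

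The main obstacle I anticipate is purely the constant-chasing: getting the factor $n$ (rather than, say, $n^2$ or $1$) and the single power of $\varphi$ on each side requires choosing the split in Young's inequality carefully — in particular deciding whether to apply one global Young inequality after a Cauchy--Schwarz over the index $i$, or $n$ separate Young inequalities and then a crude $\sum_{i} 1 = n$ bound on the $x$-side. The latter is what matches the stated form: use $2\langle QAx,u\rangle \ge -\sum_i(c^{-1}\|x\|^2_{\tau^{-1}}\cdot\tfrac{\|C_i\|^2}{p_i\tau\cdot\tau^{-1}}\cdots + c\,p_i^{-1}\sigma_i^{-1}\|u_i\|^2)$ and then bound each $x$-coefficient by $\varphi$ and sum to get $n\varphi$. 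Everything else — equivalence of norms, positive-definiteness of $Q$ and $S$ — is already available from the text, so no further machinery is needed.
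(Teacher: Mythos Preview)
Your approach is correct and essentially the same as the paper's: expand $V$, rewrite the cross term as a sum over $i$ of $\langle p_i^{-1/2}C_i\,\tau^{-1/2}x,\; p_i^{-1/2}\sigma_i^{-1/2}u_i\rangle$, apply Cauchy--Schwarz, pull out $\varphi=\max_i\|C_i\|/\sqrt{p_i}$, then Young termwise and sum (the paper restricts the sum to $i\in\mathbb{S}^k$, but since $u_i=0$ otherwise your full sum over $i=1,\dots,n$ gives the identical bound). One small slip in your bookkeeping: the relevant inequality is $\|C_i\|^2/p_i\le\varphi^2$, not $\varphi^2 p_i$; with that corrected the constants $n\varphi/c$ and $\varphi c$ fall out exactly as you describe.
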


\begin{proof}
~
\begin{equation*}
\begin{aligned}
    &\langle QAx, y^k-y^{k-1} \rangle \\ &= 
    \sum_{i\in{\mathbb{S}^k}} \langle p_i^{-1}A_ix, y_i^k-y_i^{k-1} \rangle \\
    &= \sum_{i\in{\mathbb{S}^k}} \langle p_i^{-1/2}C_i\tau^{-1/2}x,p_i^{-1/2}\sigma_i^{-1/2}(y_i^k-y_i^{k-1}) \rangle
    \\
    &\leq \sum_{i\in{\mathbb{S}^k}} \|p_i^{-1/2}C_i\|\|x\|_{\tau^{-1}}\|y_i^k-y^{k-1}_i\|_{p_i^{-1}\sigma_i^{-1}}
    \\
    &\leq \max_{i}\frac{\|C_i\|}{\sqrt{p_i}} \sum_{i\in{\mathbb{S}^k}} \frac1{2c}\|x\|_{\tau^{-1}}^2 + \frac{c}{2}\|y_i^k-y^{k-1}_i\|_{p_i^{-1}\sigma_i^{-1}}^2  \\
    &\leq \max_{i}\frac{\|C_i\|}{\sqrt{p_i}} \frac{1}{2} \Big( \frac{n}{c}\|x\|_{\tau^{-1}}^2 + c\|y^k-y^{k-1}\|_{QS^{-1}}^2 \Big).
\end{aligned}
\end{equation*}
\end{proof}

We also recall a classical result by Robbins \& Siegmund. 

\begin{lemma}[\cite{robbins1971}, Theorem 1] 
\label{RobbSieg}
Let $\mathcal{F}_k$ be a sequence of sub-$\sigma$-algebras such that, for every $k$, $\mathcal{F}_k\subset \mathcal{F}_{k+1}$ and $\alpha_k$, $\eta_k $ are nonnegative $\mathcal{F}_k$-measurable random variables such that 
$$\sum_{k=1}^\infty \eta_k <\infty \quad \mathrm{a.s.}$$
and
\begin{equation*}
    \mathbb{E}(\alpha_{k+1}\,|\,\mathcal{F}_k) \,\leq\, \alpha_k + \eta_k \quad\textrm{a.s.}
\end{equation*}
Then $(\alpha_k)_{k\in\mathbb{N}}$ converges almost surely to a random variable in $[0,\infty)$.
\end{lemma}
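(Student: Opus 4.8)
The plan is to reduce the statement to Doob's supermartingale convergence theorem by passing to a compensated process, after the standard reduction (by truncating $\alpha_k$ and $\eta_k$ at a deterministic level and letting that level tend to infinity) to the case where all the random variables are integrable. First I would introduce
\[
    V_k := \alpha_k - \sum_{j=1}^{k-1}\eta_j ,
\]
which is $\mathcal{F}_k$-measurable since $\alpha_k$ and $\eta_1,\dots,\eta_{k-1}$ are. Using the hypothesis $\mathbb{E}(\alpha_{k+1}\mid\mathcal{F}_k)\le\alpha_k+\eta_k$ together with the $\mathcal{F}_k$-measurability of $\eta_k$, one checks
\[
    \mathbb{E}(V_{k+1}\mid\mathcal{F}_k)
    = \mathbb{E}(\alpha_{k+1}\mid\mathcal{F}_k) - \sum_{j=1}^{k}\eta_j
    \le \alpha_k - \sum_{j=1}^{k-1}\eta_j = V_k \quad\textrm{a.s.},
\]
so $(V_k)_{k}$ is a supermartingale with respect to $(\mathcal{F}_k)_{k}$.

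The obstacle is that $V_k$ is bounded below only by $-\sum_{j=1}^\infty\eta_j$, which is a.s.\ finite but not a deterministic constant, so the supermartingale convergence theorem cannot be invoked directly. To circumvent this I would localize: for each $m\in\mathbb{N}$ set
\[
    T_m := \inf\Big\{\, k\ge 1 : \sum_{j=1}^{k}\eta_j > m \,\Big\},
\]
which is a stopping time because $\eta_1,\dots,\eta_k$ are $\mathcal{F}_k$-measurable. For $k\le T_m$ one has $\sum_{j=1}^{k-1}\eta_j\le m$, hence the stopped supermartingale $(V_{k\wedge T_m})_{k}$ satisfies $V_{k\wedge T_m}\ge -m$, and Doob's theorem yields that $V_{k\wedge T_m}$ converges a.s.\ to an a.s.\ finite limit.

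Finally I would glue the localizations together. Since $\sum_j\eta_j<\infty$ a.s., the events $\{T_m=\infty\}$ increase, as $m\to\infty$, to an event of full probability. On $\{T_m=\infty\}$ we have $V_{k\wedge T_m}=V_k$ for every $k$, so $V_k$ converges a.s.\ there, and since $\sum_{j=1}^{k-1}\eta_j$ converges a.s.\ (as a nonnegative series with finite sum), so does $\alpha_k=V_k+\sum_{j=1}^{k-1}\eta_j$. Taking the union over $m$ gives a.s.\ convergence of $(\alpha_k)_{k}$; the limit is nonnegative because each $\alpha_k\ge0$ and finite by the finiteness of the limits obtained above, so it lies in $[0,\infty)$. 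As indicated, the one genuine subtlety is the missing deterministic lower bound for the compensated supermartingale, which is exactly what the stopping-time truncation repairs.
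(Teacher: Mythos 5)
The paper does not actually prove this lemma---it is quoted verbatim as Theorem~1 of Robbins and Siegmund \cite{robbins1971}---so there is no in-paper argument to match; what you give is the standard self-contained proof, and its core is sound: compensating $\alpha_k$ by the partial sums of $\eta_j$ to get a supermartingale-type process $V_k$, localizing with the stopping time $T_m=\inf\{k:\sum_{j\le k}\eta_j>m\}$ (legitimate, since $\eta_1,\dots,\eta_k$ are $\mathcal{F}_k$-measurable) to restore a deterministic lower bound $-m$, applying Doob's convergence theorem to the stopped process, and gluing over $m$ using $\mathbb{P}\bigl(\bigcup_m\{T_m=\infty\}\bigr)=1$. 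This is essentially the same supermartingale/stopping-time mechanism as in the original Robbins--Siegmund proof, specialized to the case $b_n=0$, $d_n=0$.

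The one step that does not work as you state it is the preliminary reduction to integrable variables ``by truncating $\alpha_k$ and $\eta_k$ at a deterministic level'': replacing $\eta_k$ by $\eta_k\wedge M$ \emph{weakens the right-hand side} of the hypothesis $\mathbb{E}(\alpha_{k+1}\mid\mathcal{F}_k)\le\alpha_k+\eta_k$, so the drift inequality is not preserved, and truncating $\alpha_k$ alone leaves you only with convergence of $\alpha_k\wedge M$, which does not by itself exclude $\alpha_k\to\infty$. The standard repair is cheap and stays within your framework: note that your stopping time already bounds the compensator by $m$, and additionally restrict to the event $G_c=\{\alpha_1\le c\}\in\mathcal{F}_1$. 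Conditioning the drift inequality on $G_c$ and iterating gives $\mathbb{E}\bigl(\alpha_{k\wedge T_m}\mathbf{1}_{G_c}\bigr)\le c+km<\infty$, so $\mathbf{1}_{G_c}V_{k\wedge T_m}$ is a genuine integrable supermartingale bounded below by $-m$; Doob applies, and since $\alpha_1<\infty$ a.s.\ and $\sum_j\eta_j<\infty$ a.s., the events $G_c\cap\{T_m=\infty\}$ exhaust a set of full measure, after which your gluing argument goes through unchanged. (Alternatively, invoke the convergence theorem for nonnegative generalized supermartingales, which dispenses with integrability altogether.) With that adjustment your proof is complete and matches the intended use in the paper, where integrability of the iterates is not established.
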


\begin{proof}[Proof of Proposition \ref{tozero1}-\ref{iii}]

Let $c=(n+1)\varphi$. By Lemma~\ref{n/c}, for any saddle point $\hat{w}$
%
%
%
\begin{equation} \label{Delta>0}
\begin{aligned}
    \Delta^k_{\hat{w}} \geq & \frac{1}{n+1}\|x^k-\hat{x}\|_{\tau^{-1}}^2  + \|y^k-\hat{y}\|_{QS^{-1}}^2 \\ & + (1-(n+1)\varphi^2)\|y^{k}-y^{k-1}\|_{QS^{-1}}^2.
\end{aligned}
\end{equation}

Since $y^k-y^{k-1}\to0$ a.s.~this implies $\Delta^k_{\hat{w}}$ is a.s. bounded from below,
i.e.~there exists a random variable $M_1\geq0$ (independent of $k$) such that $ \alpha^k := \Delta^k_{\hat{w}}+M_1 \geq 0$ a.s.~for every $k$. Let $\eta^k:=2|\langle QA(x^{k+1}-x^{k}),y^{k}-y^{k-1} \rangle|$. Then by (\ref{inequality0}), 
\begin{equation} \label{supermartingale}
    \alpha^k + \eta^k \,\geq\, \mathbb{E}^{k+1}(\alpha^{k+1}) \quad\textrm{a.s. for every } k
\end{equation}
where all terms are nonnegative and,  for some $M_2\geq0$,
\begin{equation*}
\begin{aligned}
    \eta^k &= 2|\langle QA(x^{k+1}-x^{k}),y^{k}-y^{k-1} \rangle| \\
    &\leq 2\|Q A\| \|x^{k+1}-x^k\| \|y^k-y^{k-1}\| \\
    &\leq 2M_2  \|x^{k+1}-x^k\|_{\tau^{-1}} \|y^k-y^{k-1}\|_{QS^{-1}} \\
    &\leq M_2 \big( \|x^{k+1}-x^k\|_{\tau^{-1}}^2 + \|y^k-y^{k-1}\|^2_{QS^{-1}} \big) 
\end{aligned}
\end{equation*}
which together with~(\ref{sum(y^n)}) implies $\sum_{k=1}^\infty\eta^k<\infty$ a.s.. Thus $\alpha^k$ satisfies Lemma~\ref{RobbSieg} and we have 
\begin{equation*}
    \Delta^k_{\hat{w}}\to\Delta_{\hat{w}} \quad\textrm{a.s.\ for some } \Delta_{\hat{w}}\in[-M_1,\infty).
\end{equation*}

Now since $(\Delta^k_{\hat{w}})_{k\in\mathbb{N}}$ converges a.s.\ and is a.s.~bounded, so is the right hand side of (\ref{Delta>0}), and since $y^k-y^{k-1}\to0$ a.s.\ we deduce that $(x^k,y^k)_{k\in\mathbb{N}}$ is a.s.\ bounded. Since $x^k$ is a.s.\ bounded and $y^k-y^{k-1}\to0$ a.s.\ it follows that
\begin{equation} \label{qto0}
    \langle QA(x^k-\hat{x}),y^{k}-y^{k-1} \rangle \to 0 \quad \mathrm{a.s.}
\end{equation}
From (\ref{yto0}) and (\ref{qto0}) we know 
some of the terms in 
%
%
 $\Delta^k_{\hat{w}}$ 
converge to zero a.s., namely 
$$\langle QA(x^k-x),y^{k-1}-y^k \rangle  + \|y^{k-1}-y^k\|^2 \to 0 \quad \mathrm{a.s.},$$
thus the sequence $(\|x^k-\hat{x}\|^2_{\tau^{-1}} + \|y^k-\hat{y}\|^2_{QS^{-1}})_{k\in\mathbb{N}}$ converges a.s.~to $\Delta_{\hat{w}}$.
Finally, notice that the norm $\vertiii{w}^2 := \|x\|^2_{\tau^{-1}} + \|y\|^2_{QS^{-1}} $ is equivalent to the product norm $\|\cdot\|^2$ in $X\times Y$, hence since the sequence $(\vertiii{w^k-\hat{w}})_{k\in\mathbb{N}}$ converges a.s., so does $(\|w^k-\hat{w}\|)_{k\in\mathbb{N}}$.
\end{proof}

\begin{proof}[Proof of Proposition \ref{tozero1}-\ref{iv}] We follow closely the proof of (\cite{combettesPesquet}, Proposition~2.3).
Let $(\mathbf{\Omega},\mathcal{F},\mathbb{P})$ denote the probability space corresponding to the random sequence $(w^k)_{k\in\mathbb{N}}$,
let $\mathbf{F}$ be the set of solutions to (\ref{saddle}) and let $\mathbf{G}(w^k)$ be the set of weak cluster points of the sequence $(w^k)_{k\in\mathbb{N}}$. 

By Proposition~\ref{tozero1}-\ref{iii}, the sequence $(\|w^k-w\|)_{k\in\mathbb{N}}$ converges a.s.~for every solution $w\in\mathbf{F}$. 
Using (\cite{combettesPesquet},~Proposition~2.3-iii)), there exists $\Omega\in\mathcal{F}$ such that $\mathbb{P}(\Omega)=1$ and the sequence $(\|w^k(\omega)-w\|)_{k\in\mathbb{N}}$ converges for every $w\in\mathbf{F}$ and $\omega\in\Omega$. 
This implies, since $\mathbf{F}$ is nonempty, that $(w^k(\omega))_{k\in\mathbb{N}}$ is bounded and thus $\mathbf{G}(w^k(\omega))$ is nonempty for all $\omega\in\Omega$. 

By assumption, there exists $\tilde{\Omega}\in\mathcal{F}$ such that $\mathbb{P}(\tilde{\Omega})=1$ and
$\textbf{G}(w^k(\omega))\subset\mathbf{F}$ for every $\omega\in\tilde{\Omega}$. Let $\omega\in\Omega\cap\tilde{\Omega}$, then  $(\|w^k(\omega)-w\|)_{k\in\mathbb{N}}$ converges for all $w\in\textbf{G}(w^k(\omega))\subset\mathbf{F}$. By (\cite{combettesPesquet}, Proposition~2.3-iv)), we have that $(w^k)_{k\in\mathbb{N}}$ converges weakly almost surely to an element of $\mathbf{G}(w^k)\subset\mathbf{F}$.
\end{proof}

\begin{proof}[Proof of Proposition \ref{cluster_saddle}-\ref{2i}]

By definition of the iterates in Algorithm \ref{alg_2}, we have
\begin{equation*}
    (T_{\mathbb{S}^k}(x^{k+1},y^{k}))_i = y_i^{k+1} \textrm{ for every } i\in\{1,...,n\}
\end{equation*}
and by induction it is easy to check that $z^k = A^*y^k$ for every $k\geq0$, hence
\begin{equation*} 
\begin{aligned}
    &\bar{z}^{k+1} = z^{k+1} + \sum_{i\in\mathbb{S}^k} \frac1{p_i}\delta_i 
    \\ &= z^k + \sum_{i\in\mathbb{S}^k} (1+\frac1{p_i})\delta_i  \\
    &= A^*y^{k} + \sum_{i\in\mathbb{S}^k} \frac{1+p_i}{p_i} A_{i}^*(y_{i}^{k+1}-y_{i}^{k}) \\
    &= A^*y^{k} + \sum_{i\in\mathbb{S}^k} \frac{1+p_i}{p_i} A_{i}^* ((T_{\mathbb{S}^k}(x^{k+1},y^{k}))_{i}-y_{i}^{k}) .
\end{aligned}
\end{equation*}
Thus $(T_{\mathbb{S}^k}(x^{k+1},y^{k}))_0 = \mathrm{prox}_{\tau g}(x^{k+1} - \tau \bar{z}^{k+1}) = x^{k+2}$, which proves (\ref{itTs}).
\end{proof}

\begin{proof}[{Proof of Proposition \ref{cluster_saddle}-\ref{2ii}}]

Let $w$ be a fixed point of $T_{\mathcal{S}}$ for every instance of $\mathbb{S}^k$. By (\ref{pi}), for every $i\in\{1,...,n\}$ there exists an instance $\mathcal{S}$ such that $i\in\mathcal{S}$ and
\begin{equation*}
    y_i = w_i= (T_{\mathcal{S}} w)_i = \textrm{prox}_{\sigma_i f_i^*}(y_i+\sigma_i A_ix).
\end{equation*}
Furthermore, for any $\mathcal{S}$,
\begin{equation*} 
\begin{aligned}
    &x = w_0 = (T_\mathcal{S} w)_0 \\
    &= \textrm{prox}_{\tau g} \Big( x-\tau A^*y - \sum_{i\in\mathcal{S}} \frac{1+p_i}{p_i} \tau A_i^*((T_\mathcal{S} w)_i-y_i) \Big)
    \\ &= \textrm{prox}_{\tau g}(x-\tau A^*y). 
\end{aligned}
\end{equation*} 
These conditions on $x$ and $y$ define a saddle point (\cite{bredieslorenz}, 6.4.2). The converse result is direct.
\end{proof}

\begin{proof}[Proof of Proposition~\ref{cluster_saddle}-\ref{2iii}]
Let $v^k= (x^{k+1},y^{k})$ so that, by Proposition~\ref{cluster_saddle}-\ref{2i}, we have $v^{k+1} = T_{\mathbb{S}^k}v^k$. From Proposition~\ref{tozero1}-\ref{i} we know
\begin{equation} \label{n-n+1}
    \|v^{k} - v^{k-1}\|^2 
    \to 0 .
\end{equation}
Taking the conditional expectation and denoting $p_{\mathcal{S}} := \mathbb{P}(\mathbb{S}^k=\mathcal{S})$ gives
\begin{equation*}
\begin{aligned}
\mathbb{E}^{k+1}\|v^{k+1} - v^k\|^2
&= \mathbb{E}^{k+1}\|T_{\mathbb{S}^k} v^{k} - v^k\|^2 \\
&= \sum_{\mathcal{S}} p_\mathcal{S}\|T_\mathcal{S} v^{k} - v^k\|^2 .
\end{aligned}
\end{equation*}
Thus by (\ref{n-n+1})
\begin{equation} \label{Tstozero}
   T_\mathcal{S} v^k-v^k \to 0 \quad\textrm{a.s.}
\end{equation}
for every $\mathcal{S}$ such that $p_\mathcal{S}>0$.
Now, let $w^{\ell_k}\rightharpoonup w^*$ be a weakly convergent subsequence. From~(\ref{n-n+1}) we know $ y^{k}-y^{k-1}\to0 $ a.s. and so $v^{\ell_k}$ also converges weakly to $w^*$.
This, together with~(\ref{Tstozero}) implies, for all $u\in X\times Y$ and all $\mathcal{S}$ such that $p_\mathcal{S}>0$,
\begin{equation*}
     \langle w^*, u\rangle = \lim_{k\to\infty} \langle v^{\ell_k}, u\rangle 
     = \lim_{k\to\infty} \langle T_{\mathcal{S}}v^{\ell_k}, u \rangle
     \; \textrm{a.s.}
\end{equation*}
and, by the weak sequential continuity of $T_\mathcal{S}$ (Assumption~\ref{assu}), it follows that
\begin{equation*}
    \lim_{k\to\infty} \langle T_{\mathcal{S}}v^{\ell_k}, u \rangle = \langle T_{\mathcal{S}}\big(\lim_{k\to\infty}v^{\ell_k}\big), u \rangle = \langle T_\mathcal{S}w^*, u\rangle 
\end{equation*}
almost surely. Hence $w^*$ is almost surely a fixed point of $T_{\mathcal{S}}$ for each instance of $\mathbb{S}^k$. By Proposition~\ref{cluster_saddle}-\ref{2ii}, $w^*$ is a saddle point. \end{proof}

\section{Step size parameters}
\label{sec:step}

Theorem~\ref{main} requires a choice of step size parameters $\tau, \sigma_i$ that satisfy condition (\ref{assu_i}), i.e.~$\|D\|<1$. 
In this section we illustrate how to choose adequate step sizes for specific examples of random samplings, starting with the general case of not necessarily strongly convex functions. Furthermore, if the functionals $g,f_i^*$ are strongly convex, then optimal step sizes can be determined that offer linear convergence rate~\cite{spdhg}.

\subsection{The general convex case}
\subsubsection{Serial sampling}
For serial sampling, a valid choice of step size parameters is given by
\begin{equation} \label{tausig_serial}
    \tau\sigma_i\|A_i\|^2 < p_i \quad\textrm{for every } i\in\{1,...,n\}.
\end{equation}
Indeed this implies $\|D\|<1$~(\ref{assu_i}) since, for serial sampling, (\ref{tausig_serial}) gives
\begin{equation*}
\begin{aligned}
\|D^{1/2}z\|^2 &= \langle D z,z \rangle 
= \mathbb{E}\|C_\mathbb{S}^*Qz\|^2 \\
&= \sum_{i=1}^n p_i \| C_i^*p_i^{-1}z_i \|^2 \\
&= \sum_{i=1}^n p_i^{-1} \tau\sigma_i \|A_i^*z_i \|^2
< \|z\|^2.
\end{aligned}
\end{equation*}
This insight can be generalized as follows.

\subsubsection{\textit{b}-serial sampling}
Consider a partition of the set $\{1,...,n\}$ into $m$ blocks $I_j$, i.e. 
\begin{equation*}
    \bigcup_{j=1}^m I_j = \{1,...,n\}, \quad 
I_{j}\cap I_l = \emptyset \text{ for all } j\neq l.
\end{equation*}
At every iteration we select a single block $j\in\{1,...,m\}$ and update every index $i\in I_j$. We also assume every index $i$ within a block $I_j$ to have the same dual step size~$\sigma_i$.
For such a sampling, step size condition (\ref{tausig_serial}) reads
\begin{equation} \label{assu_j}
    \tau \sigma_j \|\tilde{A}_j\|^2 < \tilde{p}_j \quad \textrm{ for } i\in I_j, \; j\in\{1,...,m\}  
\end{equation}
where $\tilde{p}_j$ is the probability of choosing block $I_j$ and $\tilde{A}_j:X\to\prod_{i\in I_j}Y_i$ is the operator 
\begin{equation*}
    \tilde{A}_jx = (A_ix)_{i\in I_j}.
\end{equation*}
We refer to this random process as \textit{$b$-serial sampling} when all subsets have size $b$, i.e.~$|I_j|=b$ for all $j\in\{1,...,m\}$, with $1\leq b \leq n$.

{In general, this form of sampling is equivalent to serial sampling where the operators $\tilde{A}_j$ have taken the role of the $A_i$, however, we make this distinction to focus on parameter $b$, the batch size, which will be an important variable for our numerical experiments. We will still refer to it as serial sampling when referring to the case $b=1$.}

\subsubsection{\textit{b}-nice sampling}
Consider now the random sampling $\mathbb{S}$ that randomly selects $b$ elements from $\{1,...,n\}$ at each iteration, i.e.~every instance of $\mathbb{S}$ is a random subset of size $b$. Clearly at each iteration there are $\binom{n}{b}$
possible choices.
This process is different from $b$-serial sampling, since the subsets are not disjoint and do not form a partition of $\{1,...,n\}$. 

In this paper we will assume {uniform} $b$-nice sampling, i.e.~all possible instances of $\mathbb{S}$ have equal probability. 
In this case, it is easy to see the probabilities $p_i = \mathbb{P}(i\in\mathbb{S})$ are given by $p_i = {b}/{n}$, and condition~(\ref{tausig}) reads
%
%
\begin{equation} \label{assu_b3}
  \frac{\tau\sigma n^2}{b^2}
  \|\mathbb{E}(A_{\mathbb{S}}A_{\mathbb{S}}^*)\| < 1. 
\end{equation}
%


\subsection{The strongly convex case}
Algorithm~\ref{alg_2} has linear convergence for strongly convex functionals $g,f_i^*$ \cite{spdhg}. 

\begin{theorem}
\label{Theo61}
    Let $g,f_i^*$ be strongly convex with parameters $\mu_g,\mu_i>0$ for $i\in\{1,...,n\}$ and let $(\hat{x},\hat{y})$ be the unique solution of~(\ref{saddle}). Let $D$ satisfy 
    \begin{equation} \label{vitheta}
        \|D\| < \frac{1}{\theta}
    \end{equation}
    where the extrapolation parameter $\theta\in(0,1)$ satisfies the lower bounds
    \begin{equation} \label{lbtheta}
    \begin{aligned}
        \theta \geq \frac{1}{1+2\mu_g\tau}, \quad \theta \geq \max_{i} ( 1-2\frac{\mu_i\sigma_ip_i}{1+2\mu_i\sigma_i} ).
    \end{aligned}
    \end{equation}
    Then Algorithm~\ref{alg_2} converges with rate $\mathcal{O}(\theta^k)$, i.e.\ the iterates $x^k,y^k$ satisfy
    \begin{equation} \label{theta^k}
    \begin{aligned}
    &\mathbb{E}( c\|x^k-\hat{x}\|^2_\mathbf{X} + \|y^k-\hat{y}\|^2_\mathbf{Y} )
    \\ & \leq \theta^k ( \|x^0-\hat{x}\|^2_\mathbf{X} + \|y^0-\hat{y}\|^2_\mathbf{Y} )
    \end{aligned}
    \end{equation}
    where $c= 1-\theta\|Q(\mathbb{E}( A_{\mathbb{S}}A_{\mathbb{S}}^*)Q\|$ and the operators $\mathbf{X},\mathbf{Y}$ are given by
    $\mathbf{X} = \tau^{-1}+2\mu_g$ and $\mathbf{Y} = {(S^{-1}+2M)Q}$ with $M=\mathrm{diag}(\mu_1,...,\mu_n)$.
\end{theorem}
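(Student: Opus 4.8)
The plan is to follow the strategy of the original SPDHG strong-convexity argument (\cite{spdhg}, Theorem~5.1), adapting it to the operator-norm step-size condition~(\ref{vitheta}) in place of the ESO condition. First I would sharpen the basic descent estimate: starting from the strong convexity of $g$ and $f_i^*$, the prox steps of Algorithm~\ref{alg_2} give, for the unique saddle point $(\hat{x},\hat{y})$, a one-step inequality of the form
\begin{equation*}
\begin{aligned}
\mathbb{E}^{k+1}\big( \|x^{k+1}-\hat{x}\|^2_{\tau^{-1}+2\mu_g} + \|y^{k+1}-\hat{y}\|^2_{(S^{-1}+2M)Q} \big) \\ \leq \theta\,\big( \|x^{k}-\hat{x}\|^2_{\tau^{-1}} + \|y^{k}-\hat{y}\|^2_{S^{-1}Q} \big) + (\text{cross terms}),
\end{aligned}
\end{equation*}
where the factor $\theta$ on the right appears precisely because the lower bounds~(\ref{lbtheta}) are designed so that $\tfrac{1}{\tau}\le \theta(\tfrac1\tau+2\mu_g)$ coordinatewise and the analogous inequality for each dual block. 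This is the step where~(\ref{lbtheta}) is used.

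Next I would control the cross terms. These are the inner products between $A(x^{k+1}-\hat x)$ (or its extrapolated version) and the dual increments $y^{k+1}-y^k$, together with the residual $z$-terms; these are exactly the quantities handled in Lemma~\ref{ala} and Lemma~\ref{lemma4.2}. Using the definition $D=Q\mathbb{E}(C_\mathbb{S}C_\mathbb{S}^*)Q$ and its expression~(\ref{PCC}), one bounds the expectation of the coupling term by $\|D\|$ times the dual energy; with~(\ref{vitheta}), i.e.~$\theta\|D\|<1$, this leaves a strictly positive coefficient $c=1-\theta\|Q\mathbb{E}(A_{\mathbb{S}}A_{\mathbb{S}}^*)Q\|$ in front of $\|x^{k}-\hat x\|^2$, which is where the constant $c$ in~(\ref{theta^k}) comes from. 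Collecting terms yields a clean contraction
\begin{equation*}
\mathbb{E}^{k+1}\big( c\|x^{k+1}-\hat{x}\|^2_{\mathbf{X}} + \|y^{k+1}-\hat{y}\|^2_{\mathbf{Y}} \big) \le \theta\, \big( c\|x^{k}-\hat{x}\|^2_{\mathbf{X}} + \|y^{k}-\hat{y}\|^2_{\mathbf{Y}} \big),
\end{equation*}
possibly after absorbing a harmless rescaling so that the Lyapunov functional on both sides matches. Taking total expectations and iterating from $k=0$ gives~(\ref{theta^k}); one also invokes Lemma~\ref{BESO} to note that~(\ref{vitheta}) is the natural generalization of the ESO condition used in~\cite{spdhg}.

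The main obstacle I anticipate is bookkeeping the Lyapunov functional so that it is simultaneously (a) bounded below by $c\|x^{k}-\hat x\|^2_{\mathbf X}+\|y^{k}-\hat y\|^2_{\mathbf Y}$ with the \emph{same} $c$ that appears in the statement, and (b) contracted by exactly $\theta$ per step — reconciling the strong-convexity gain (which lives in $\mathbf{X}=\tau^{-1}+2\mu_g$ and $\mathbf{Y}=(S^{-1}+2M)Q$) with the primal-dual coupling (which is naturally measured in $\tau^{-1}$ and $S^{-1}Q$) requires the lower bounds~(\ref{lbtheta}) to be used in a tight, coordinatewise fashion. A secondary technical point is handling the extrapolation/auxiliary variables $\bar z^k$ exactly as in the proof of Proposition~\ref{cluster_saddle}-\ref{2i}, so that the per-iteration identity feeding the recursion is the genuine SPDHG update and not an idealized one. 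Once these are in place the argument is a routine, if lengthy, telescoping.
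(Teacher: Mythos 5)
Your proposal contains, almost as an aside, the paper's entire proof: since $\|D\|<\theta^{-1}$ is equivalent to the existence of ESO parameters with $v_i<\theta^{-1}p_i$ (the same scaling argument as Lemma~\ref{BESO}, taking $v_i=\|D\|p_i$), the statement is precisely Theorem~6.1 of \cite{spdhg} with its hypothesis rewritten, and nothing more needs to be re-derived. Had you leaned fully on that observation, the rest of your sketch would be unnecessary; note also that the relevant result in \cite{spdhg} is Theorem~6.1, not Theorem~5.1.

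Judged as a standalone re-derivation, however, the sketch has a genuine gap at the decisive step. The per-iteration inequality you propose, a clean contraction $\mathbb{E}^{k+1}\big(c\|x^{k+1}-\hat x\|^2_{\mathbf X}+\|y^{k+1}-\hat y\|^2_{\mathbf Y}\big)\le\theta\big(c\|x^{k}-\hat x\|^2_{\mathbf X}+\|y^{k}-\hat y\|^2_{\mathbf Y}\big)$ with the \emph{same} constant $c$ on both sides, is not what the strong-convexity analysis delivers and is unlikely to hold as stated: because $x^{k+1}$ is computed from the extrapolated variable $\bar z^k$, what contracts by $\theta$ is a Lyapunov functional that retains the primal--dual coupling term (the strongly convex analogue of $V$ and $\Delta^k_{\hat w}$ in Section~\ref{sec:conv}), and the constant $c=1-\theta\|Q\mathbb{E}(A_\mathbb{S}A_\mathbb{S}^*)Q\|$ enters only once, at the end, when that functional is bounded below by $c\|x^k-\hat x\|^2_{\mathbf X}+\|y^k-\hat y\|^2_{\mathbf Y}$ via Cauchy--Schwarz and (\ref{vitheta}). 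This is exactly why $c$ appears on the left of the final estimate but not on the right at $k=0$. You identify this reconciliation yourself as the ``main obstacle'' and dispose of it with an unspecified ``harmless rescaling,'' so the core inequality is asserted rather than proved; either carry out the Lyapunov bookkeeping of \cite{spdhg} in full, or simply invoke the equivalence of Lemma~\ref{BESO} and cite the original theorem, as the paper does.
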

\begin{proof}
This follows from~(\cite{spdhg}, Theorem~6.1), where we have replaced the ESO parameter condition $v_i<\theta^{-1}{p_i}$ with the equivalent condition $\|D\|<{\theta}^{-1}$, as explained in Lemma~\ref{BESO}.
\end{proof}

Furthermore, it is possible to estimate the optimal (smallest) value of $\theta$ by equating the lower bounds in (\ref{lbtheta}) together with step size condition (\ref{vitheta}) \cite{spdhg}. 
For instance, from (\ref{tausig}) we know that, for uniform step sizes, condition (\ref{vitheta}) is satisfied by 
\begin{equation*} 
    \tau\sigma\|B\| \theta =  \rho^2
\end{equation*}
where $B=Q\mathbb{E}(A_{\mathbb{S}}A_{\mathbb{S}}^*)Q$ and $\rho\in(0,1)$. This together with (\ref{lbtheta}) yields
\begin{equation} \label{theta_arb}
    \theta = 
    \max_i 1 - \frac{2p_i}{1+\sqrt{\beta_i}}
\end{equation}
where ${\beta_i} = 1+{\|{B}\|p_i}/{(\mu_g\mu_i \rho^2)}$, and the corresponding optimal step sizes are
\begin{equation} \label{tausig_arb}
\begin{aligned}
    \sigma &= \min_i\frac{\mu_i^{-1}}{ \sqrt{\beta_i} -1 }, \\
    \tau &= \min_i\frac{\mu_g^{-1}p_i}{1 - 2p_i + \sqrt{\beta_i}}.
\end{aligned}
\end{equation}

\subsubsection{Serial sampling}
Better theoretical convergence rates can be computed if more is known about the sampling. In particular, 
optimal convergence rates for uniform and non-uniform serial sampling have been proposed in~\cite{spdhg}. These are, for uniform serial sampling,
%
\begin{equation*} 
     \theta_{us} 
     = 1 - \frac{2}{n + n\max_i\sqrt{\alpha_i}}
\end{equation*}
where $\alpha_i = 1+{\|A_i\|^2}/({\mu_g\mu_i\rho^2})$, $p_i=b/n$ and step size parameters given by
\begin{equation} \label{step_unif}
\begin{aligned}
    \sigma_i &= \frac{\mu_i^{-1}}{\max_j\sqrt{\alpha_j}-1}, \\
    \tau &= \frac{\mu_g^{-1}}{n-2+n\max_j\sqrt{\alpha_j}},
\end{aligned}
\end{equation}
for $i\in\{1,...,n\}$, and, for serial sampling with optimized probabilities,  
\begin{equation*} 
    \theta_{os} 
    = 1 - \frac{2}{n + \sum_{i=1}^n\sqrt{\alpha_i}}
\end{equation*}
where the optimal probabilities are found to be 
\begin{equation} \label{opt_prob}
    p_i = \frac{1+\sqrt{\alpha_i}}{n+\sum_{j=1}^n\sqrt{\alpha_j}}, \quad i\in\{1,...,n\}
\end{equation}
and the step size parameters are
\begin{equation} \label{step_opt}
\begin{aligned}
    \sigma_i &= \frac{\mu_i^{-1}}{\sqrt{\alpha_i}-1}, \\
    \tau &= \frac{\mu_g^{-1}}{n-2+\sum_{j=1}^n\sqrt{\alpha_j}},
\end{aligned}
\end{equation}
for $i\in\{1,...,n\}$. 
In general $\theta_{os}\leq\theta_{us}$, as the former imposes less restrictions over probabilities $p_i$.

\subsubsection{\textit{b}-serial sampling}
These last results are also useful for $b$-serial sampling. To see this we define $\tilde{A}_j$ and $\tilde{p}_j$ as in~(\ref{assu_j}) and $\tilde{f}^*_j$ as
\begin{equation*}
    \tilde{f}^*_j(\tilde{y}_j) = \sum_{i\in I_j} f_i(y_i), \quad
\tilde{y}_j \in \prod_{i\in I_j} Y_i,
\quad j\in\{1,...,m\}. 
\end{equation*}
With this notation, our original (strongly convex) saddle point problem~(\ref{saddle}) 
becomes 
\begin{equation*} \label{saddle_b}
    \hat{x},\hat{y} \in \arg\min_{x\in X}\max_{\tilde{y}\in Y} \sum_{j=1}^m \langle \tilde{A}_j x,\tilde{y}_j \rangle - \tilde{f}_j^*(\tilde{y}_i) + g(x)
\end{equation*}
where $\tilde{y} = (\tilde{y}_1,...,\tilde{y}_m)$ and $\tilde{f}^*_j$ are strongly convex with parameters $\tilde{\mu}_j = \min\{\mu_i\,|\,i\in I_j\}$. Thus Lemma~\ref{Theo61} guarantees the linear convergence of Algorithm~\ref{alg_2} and, as before, the optimal convergence rates are 
\begin{equation} \label{theta_serial2}
\begin{aligned}
    \theta_{us} 
    &= 1 - \frac{2}{m+m\max_j\sqrt{\tilde{\alpha}_j}} \\
    \theta_{os} 
    &= 1 - \frac{2}{m + \sum_{j=1}^m\sqrt{\tilde{\alpha}_j}}
\end{aligned}
\end{equation}
with $\tilde{\alpha}_j = 1+{\|\tilde{A}_j\|^2}/{(\mu_g\tilde{\mu}_j\rho^2)}$. In both cases, the optimal step size parameters $\tau,\tilde{\sigma}_j$ are also given as in~(\ref{step_unif}) or~(\ref{step_opt}). The optimal probabilities are given by~(\ref{opt_prob}).

Notice how the optimal convergence rates $\theta_{us}$ and $\theta_{os}$ depend on $b$. In particular, choosing $b=n$ gives us the optimal step sizes for the PDHG algorithm~(\ref{alg_spdhg}), i.e.~SPDHG with full sampling. In this case, $m=1$ and the optimal convergence rate is
\begin{equation} \label{theta_full}
    \theta_{us} = \theta_{os} 
    = 1 - \frac{2}{1+\sqrt{ 1+\frac{\|A\|^2}{\mu_g\mu_{f^*}\rho^2} }}
\end{equation}
where $\mu_{f^*} = \min\{\mu_1,...,\mu_n\}$ is the convexity parameter of $f^*(y) = \sum_{i=1}^nf_i^*(y_i)$.

Notice as well that there is more than one way to partition the set $\{1,...,n\}$ into $m$ subsets $I_j$ of size $b$. For instance, if $n$ is divisible by $b$, the number $\mathbf{k}$ of different partitions of $\{1,...,n\}$ into subsets of size $b$ is 
\begin{equation} \label{partitions}
    \mathbf{k}(n,b) = \prod_{j=1}^{\frac{n}{b}}
    \left( \begin{tabular}{c} $jb-1$ \\ $b-1$ \end{tabular} \right) .
\end{equation} 
Moreover, the optimal convergence rates $\theta_{us}$ and $\theta_{os}$ will also depend on which partition we use, since the subsets $I_j$ define in turn the values $\tilde{\mu}_j$ and $\|\tilde{A}_j\|$. In Section~\ref{sec:MRI}, we will see examples of how using a different partition of $\{1,...,n\}$ can improve the convergence rate of SPDHG with $b$-serial sampling.

\subsubsection{\textit{b}-nice sampling}
For uniform $b$-nice sampling we use rate~(\ref{theta_arb}) with uniform probability $p_i=b/n$, and we denote
\begin{equation}\label{theta_bnice}
    \theta_{un} = 1 - \frac{2b}{n + n\max_i\sqrt{\beta_i}} 
\end{equation}
with step size parameters given as in~(\ref{tausig_arb}).

This quantity also depends strongly on the batch size $b$ since the probabilities $p_i$ and thus also the norm of the operator ${B}$ are determined by $b$.
In particular, choosing $b=n$ in (\ref{theta_bnice}) results in $p_i=1$ and $\|{B}\|=\|AA^*\| = \|A\|^2$, hence we recover the same full sampling convergence rate~(\ref{theta_full}) from $b$-serial sampling.

\section{Parallel MRI reconstruction}
\label{sec:MRI}

In this section we take real MRI data and perform parallel MRI reconstruction with sensitivity encoding using SPDHG~\cite{mri,sense}. For more examples on stochastic methods applied to parallel MRI reconstruction, see~\cite{ehrhardt2014joint,hager2015alternating,OngExtremeMRI,oscanoa2023coil}. For our experiments, the data have been taken from the NYU fastMRI dataset \cite{fastmri, fastmri_arxiv}.

\begin{figure*}[th]
\centering
    \begin{subfigure}[b]{0.34\textwidth}
    \includegraphics[width=0.99\textwidth]{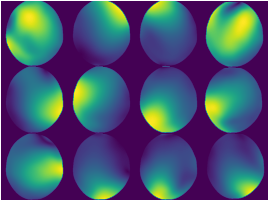}
    \vspace{-3.5mm}
    \caption{Coil sensitivities}
    \label{fig:subcoils}
    \end{subfigure}
    \hspace{1mm}
    \begin{subfigure}[b]{0.3\textwidth}
    \centering
    \includegraphics[width=0.9\textwidth]{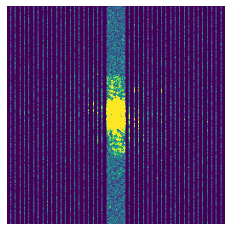}
    \caption{Data $\textcolor{black}{b_1}$ in $k$-space}
    \label{fig:coilsb}
    \end{subfigure}
    \begin{subfigure}[b]{0.3\textwidth}
    \centering
    \includegraphics[width=0.9\textwidth]{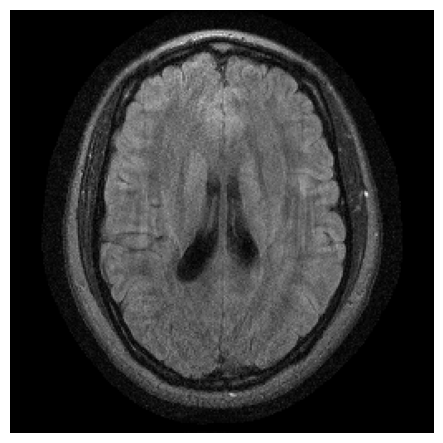}
    \caption{Reconstructed image $\textcolor{black}{\hat{x}}$ }
    \end{subfigure}
\caption{ \textbf{Parallel MRI reconstruction with $n=12$ coils.} (a) Spatial visualization of coil sensitivities for an MRI scanner with 12 electromagnetic coils distributed uniformly in a circle. (b) Data collected by a single coil, encoded in Fourier space. (c) A reconstructed image obtained using sigpy's SenseRecon method~\cite{sigpy}. }
\label{fig:coils}
\end{figure*}

In parallel MRI reconstruction, the goal is obtaining a set of images from a collection of raw data. The data is collected by an array of receiver coils within an MRI scanner. 
For a system with $n$ coils, we are given $n$ data $b_1,...,b_n$ which relate to the inverse problems
$$b_i=A_ix+\eta_i,$$
where each $A_i:\mathbb{C}^{d_1}\to \mathbb{C}^{d_2}$ is the forward operator from the signal space to the sample space, and $\eta_i\in \mathbb{C}^{d_2}$ represents random noise.
Figure~\ref{fig:coilsb} shows a visualisation of a data vector $b_i$ collected by a single coil. 

The individual sensitivity of each receiver coil may be regarded as a modification of the encoding operators, in a process known as sensitivity encoding~\cite{pruessmann1998coil}.
The forward operators are given by 
$$A_i = S\circ F\circ C_i,$$
where $S:\mathbb{C}^{d_1}\to\mathbb{C}^{d_2}$ is a subsampling operator, $F:\mathbb{C}^{d_1}\to\mathbb{C}^{d_1}$ represents the discrete Fourier transform and $C_ix = c_i\cdot x $ is the element-wise multiplication of $x$ and the $i$-th coil-sensitivity map $c_i\in \mathbb{C}^{d_1}$. 

Figure~\ref{fig:subcoils} shows the magnitude of the coil sensitivity maps $c_i$, obtained using sigpy's EspiritCalib method~\cite{sigpy} on a dataset collected using $n=12$ coils, arranged uniformly in a circle around a patient's head. 

We can reconstruct an image $\hat{x}$ from the data $b_i$ by solving the regularized least-squares problem
\begin{equation} \label{mri_model_g}
    \min_x \sum_{i=1}^n \frac12\|A_ix-b_i\|^2_2 + \lambda_1\|\nabla x\|_1 + \frac{\lambda_2}2\|x\|_2^2
\end{equation}
where $\lambda_1$ and $\lambda_2$ are regularization parameters, and we recover our convex minimization template~(\ref{min}) by identifying $\mathbb{C}^d$ with $\mathbb{R}^{2d}$, and setting $X=\mathbb{R}^{2d_1}$, $Y_i=\mathbb{R}^{2d_2}$, and  
$ f_i(y) = \frac12\|y - b_i\|^2 $,
$ g(x) = \lambda_1\|\nabla x\|_1 + \frac{\lambda_2}2\|x\|_2^2. $

In order to test the performance of SPDHG we investigate the convergence of its iterations $(x^k,y^k)$ at every epoch. We consider an epoch as the number of iterations required to perform roughly the same amount of computational work (e.g.~linear operations) as one iteration of the deterministic PDHG method. In particular, for $b$-serial and $b$-nice samplings, where one epoch is roughly equivalent to $m=n/b$ iterations, we define the \textit{relative primal error} (at epoch $k$)
as
\begin{equation} \label{e_b}
    \textbf{e}_b(k) := \frac{\|x^{mk}-\hat{x}\|}{\|\hat{x}\|},
\end{equation}
where $\hat{x}$ is a solution of~(\ref{mri_model_g}). In our experiments, $\hat{x}$ is obtained using 
the deterministic PDHG method~\cite{combettesPesquet} for $(10^5)$ iterations.

\begin{figure*}[t]
    \centering
    \includegraphics[width=0.9\textwidth]{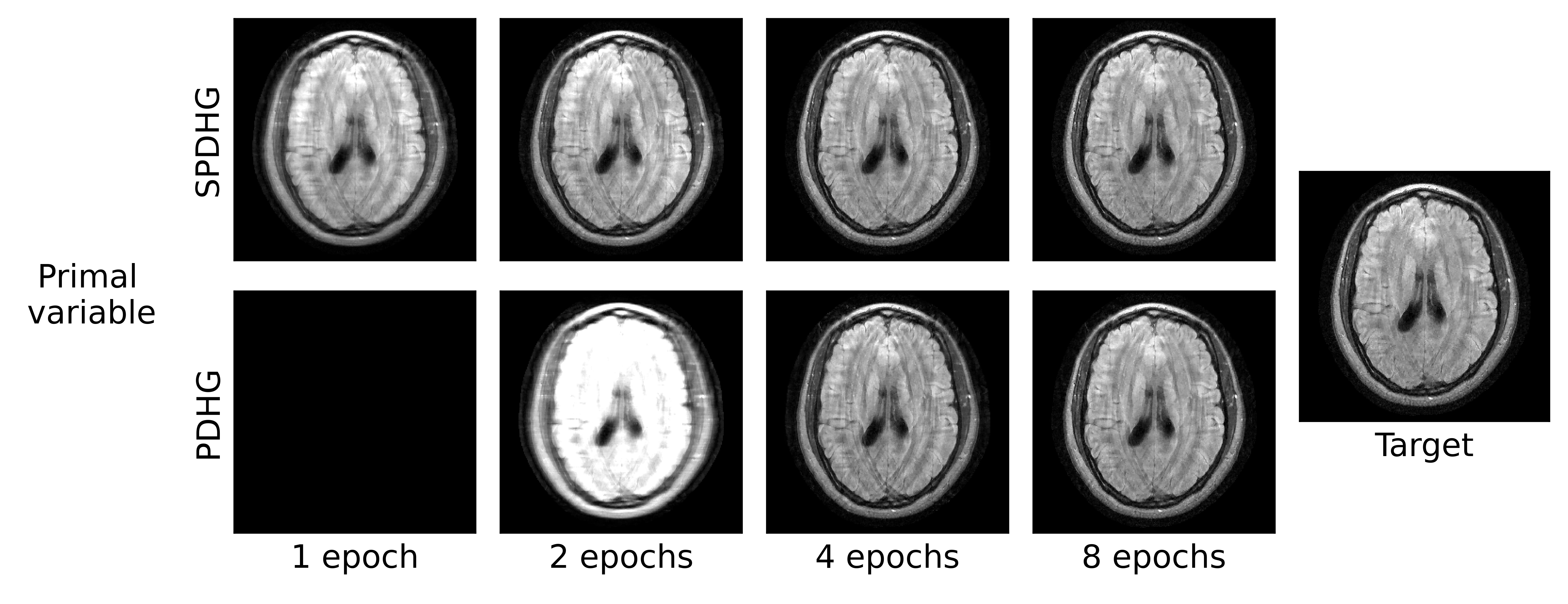}
    \includegraphics[width=0.9\textwidth]{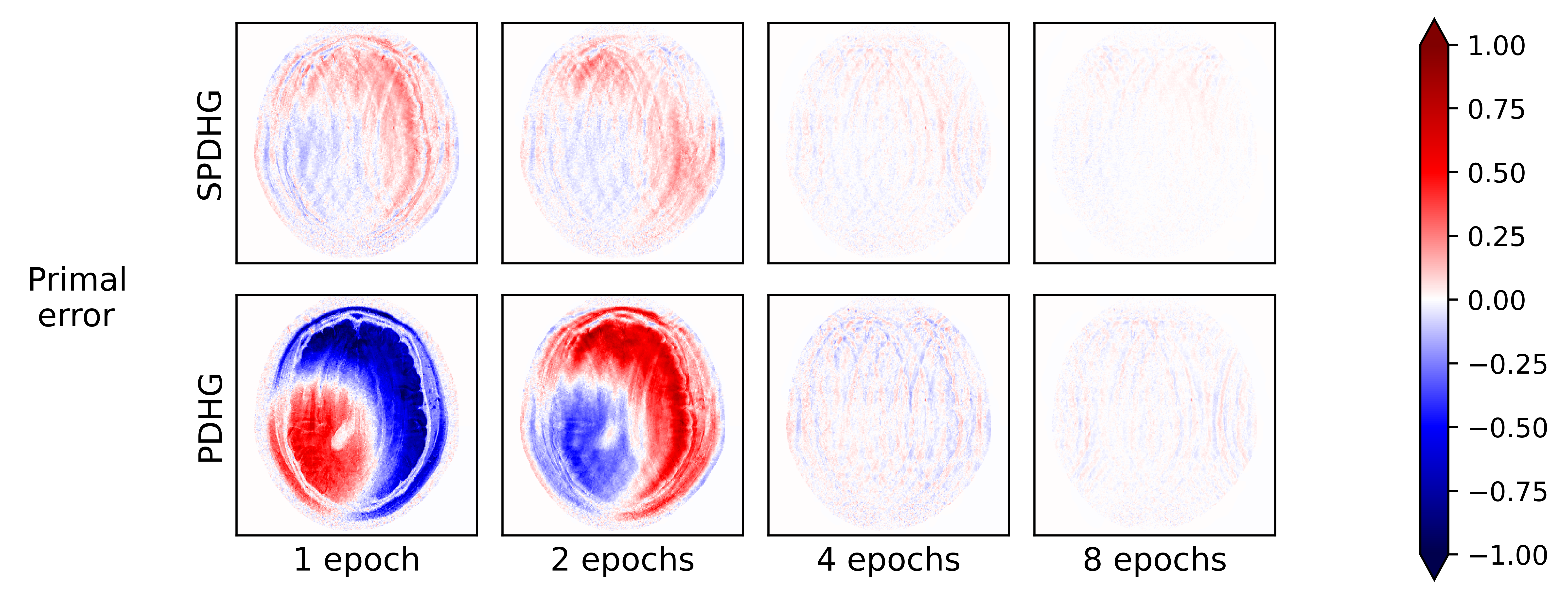}
\caption{\textbf{Visual comparison of PDHG and SPDHG.} Top: Visualisation of the primal variable $x^{mk}$. Bottom: Real part of the primal error $x^{mk}-\hat{x}$. 
While both methods converge to the same solution, the stochastic SPDHG progresses faster than the deterministic PDHG. }
\label{fig:brains}
\end{figure*}

Similarly, the convergence rate~$\theta$ from Lemma~\ref{Theo61} holds at every iteration~$k$, hence for $b$-serial and $b$-nice sampling we can define the \textit{convergence rates per epoch} as
\begin{equation} \label{theta_epoch}
    \mathbf{\vartheta}_{us} = \theta_{us}^m, \quad
    \mathbf{\vartheta}_{os} = \theta_{os}^m, \quad 
    \mathbf{\vartheta}_{un} = \theta_{un}^m
\end{equation}
with $\theta_{us}$, $\theta_{os}$ and $\theta_{un}$ defined as in (\ref{theta_serial2}) and (\ref{theta_bnice}).

All our experiments are implemented in Python using the Operator Discretization Library (ODL) \cite{odl}. The code for SPDHG's algorithm is based on the original implementation from~\cite{spdhg}. Operator norms are computed using the power method in ODL.
The proximity operator $\textrm{prox}_{\tau g}$ is approximated iteratively using the Fast Iterative Shrinkage/Thresholding Algorithm (FISTA) with warm starting~\cite{FISTA}. 
The number of iterations of FISTA per epoch of SPDHG is kept constant so different samplings can be compared.
The code files necessary to reproduce the figures in this paper are available at 
{https://github.com/Eric-Baruch/spdhg}. 

\subsection{SPDHG versus PDHG}

Figures~\ref{fig:brains} and~\ref{fig:b1} show the result of solving~(\ref{mri_model_g}) with $\lambda_1=10^2$ and $\lambda_2=10^{-2}$ using SPDHG with optimal $b$-serial sampling for two different values of $b$. These are $b=1$, i.e.\ serial sampling, and $b=12$, i.e.\ the deterministic PDHG. In this case, 1 epoch corresponds to $n=12$ iterations of SPDHG or 1 iteration of PDHG. Figure~\ref{fig:brains} shows the primal iterates of SPDHG approximate the solution faster than those of the deterministic PDHG.

\begin{figure}[!htb]
\includegraphics[width=0.48\textwidth]{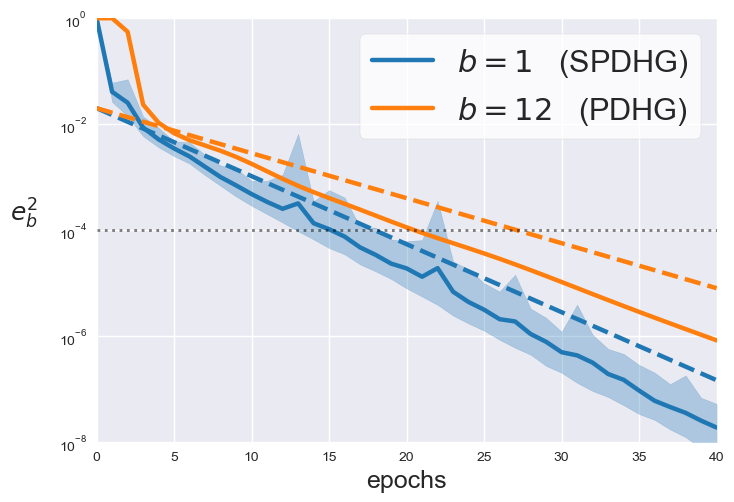}
\caption{\textbf{Quantitative comparison of SPDHG and PDHG.} Relative primal error squared $e_b^2$ (\ref{e_b}) for Algorithm~\ref{alg_2} with optimal $b$-serial sampling, for batch size 1 and $n$.
The dashed lines show the theoretical convergence rates $\vartheta_{os}$~(\ref{theta_epoch}). The solid blue curve is the average of 40 independent runs contained within the shaded region. 
The dotted grey line indicates the iterations have visually converged to the solution. SPDHG reaches this mark significantly faster than PDHG, both theoretically and empirically.
}
\label{fig:b1}
\end{figure}

\begin{figure}[!htb]
    \includegraphics[width=0.48\textwidth]{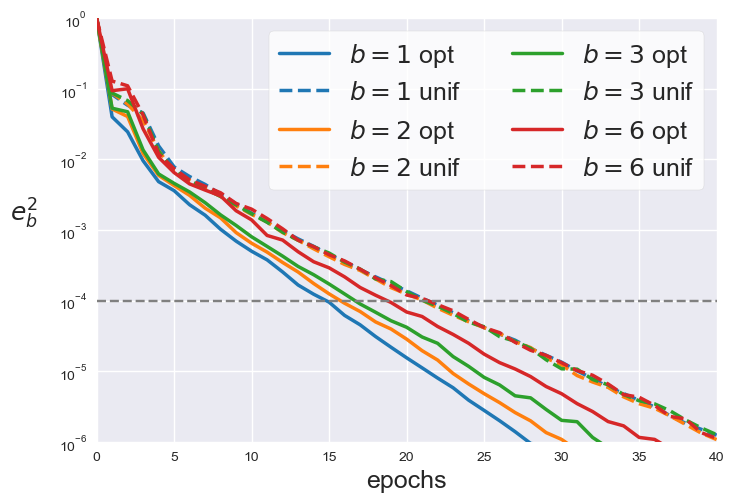}
    \caption{\textbf{Optimal sampling versus uniform sampling.} Performance of SPDHG for different values of $b$, using step size parameters with uniform probabilities~(\ref{step_unif}) and with optimal probabilities~(\ref{step_opt}). Each curve is the average of 40 independent runs. Convergence speed is significantly increased by sampling with optimal probabilities. }
\label{fig:bserial_bfair}
\end{figure}

Figure~\ref{fig:b1} shows the relative primal error $\mathbf{e}_b$ (solid) along with its theoretical convergence rate per epoch $\vartheta_{os}$ (dashed). The theoretical rate per epoch for the SPDHG curve is computed to be $\vartheta_{os} \approx 0.7439$ while the rate for PDHG is $\vartheta_{os} \approx 0.8222$.
To minimise randomness, the blue curve is the average of 40 independent runs, and the shaded region around it illustrates the minimum and maximum values attained for all runs. The orange curve is deterministic and has no variance. We see that SPDHG has faster convergence than its deterministic counterpart, both theoretically and empirically, and that the theoretical rate is indicative of its performance.

\subsection{Optimal versus uniform sampling}

In~(\ref{mri_model_g}), functionals $f^*_i,g$ are strongly convex with convexity parameters $\mu_i = 1$ for all $i$ and $\mu_g = \lambda_2$, respectively. Hence as in Section~\ref{sec:step} we can use Lemma~\ref{Theo61} to determine, for each type of sampling, the optimal step size parameters $\tau, \sigma_i$ such that the convergence rate is minimised.
We compute these optimal step size parameters for two types of sampling: optimal $b$-serial sampling and uniform $b$-serial sampling. 

Figure~\ref{fig:bserial_bfair} shows the result of using these parameters for different values of $b$. We see that for all $b$ the convergence is  faster for optimal sampling compared to uniform sampling, i.e.\ the convergence speed is significantly improved by adjusting the probabilities and the step size parameters.

\subsection{Best partition for optimal sampling}

{When considering $b$-serial sampling, it is necessary to partition the data into $n/b$ subsets of size $b$. A natural partition of $\{1,...,n\}$ is the \textit{consecutive} partition, consisting of the subsets 
$$ \quad I_j=\{(j-1)b+1,\,(j-1)b+2,\,...\,,\,(j-1)b+b\}. $$ 
%
%
Of course
this is not the only way to partition $\{1,...,n\}$ into $m$ subsets of size $b$. For instance, from~(\ref{partitions}) we know the number of partitions for $n=12$ and $b=6$ is $ 
462 $, 
while for $b=3$ the number is 
$
15\,400 $.
For each one of these partitions it is possible to compute the optimal convergence rates per epoch $\vartheta_{os}$ and $\vartheta_{us}$.

\begin{figure}[ht]
    \centering
    \includegraphics[width=0.47\textwidth]{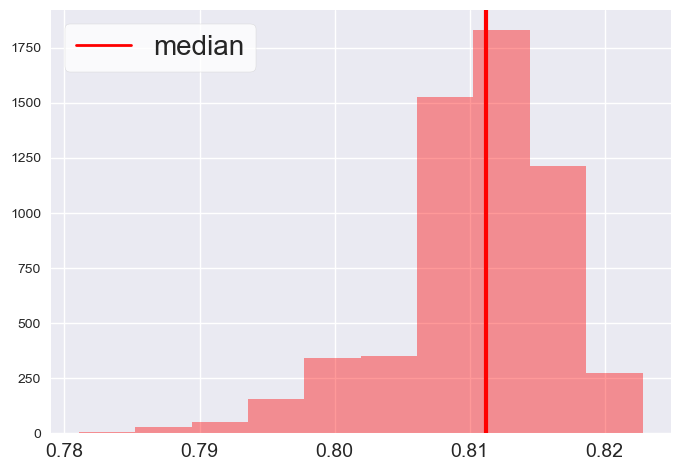}
    \caption{ \textbf{
    Distribution of $\vartheta_{os}$ for $b=4$. } Theoretical convergence rates $\vartheta_{os}$ for optimal $b$-serial sampling, computed as in~(\ref{theta_epoch}) for all 5775 possible partitions for $n=12$ and $b=4$. The choice of partition has a strong effect on the guaranteed performance of SPDHG. }
    \label{fig:hista}
    \end{figure}

\begin{figure*}[t]
    \centering
    \includegraphics[width=0.98\textwidth]{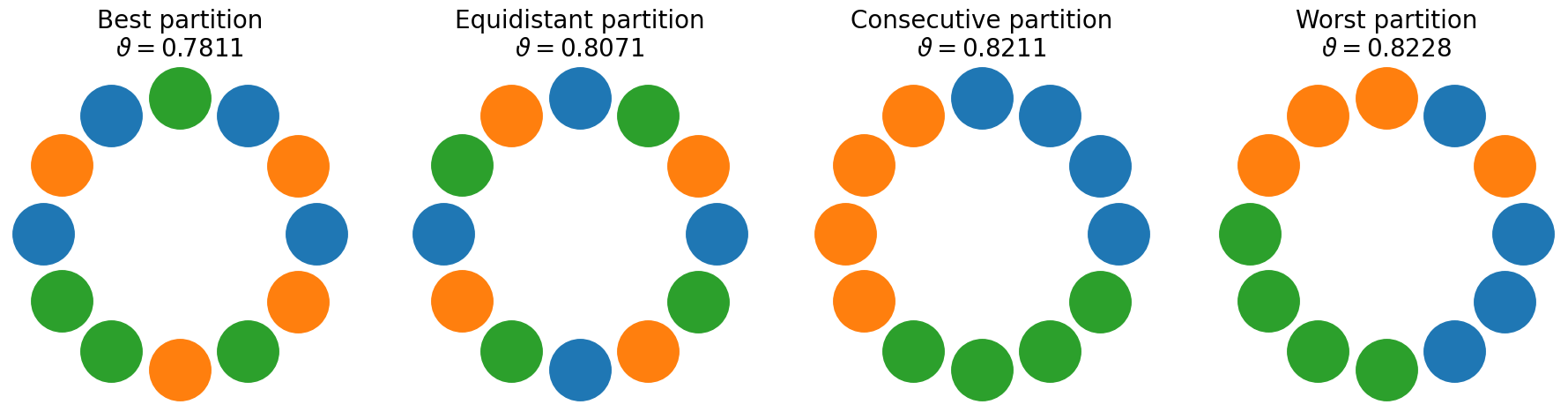}
    %
\caption{ \textbf{Different partitions for $b=4$ and their convergence rates for optimal $b$-serial sampling.} The best and the worst of all possible $5775$ partitions of batch size $b=4$ according to their theoretical convergence rate $\vartheta_{os}$ (\ref{theta_epoch}), as well as two other partitions of interest, the equidistant and the consecutive partition.  }
\label{fig:besta}
\end{figure*}

\begin{figure}
\includegraphics[width=0.48\textwidth]{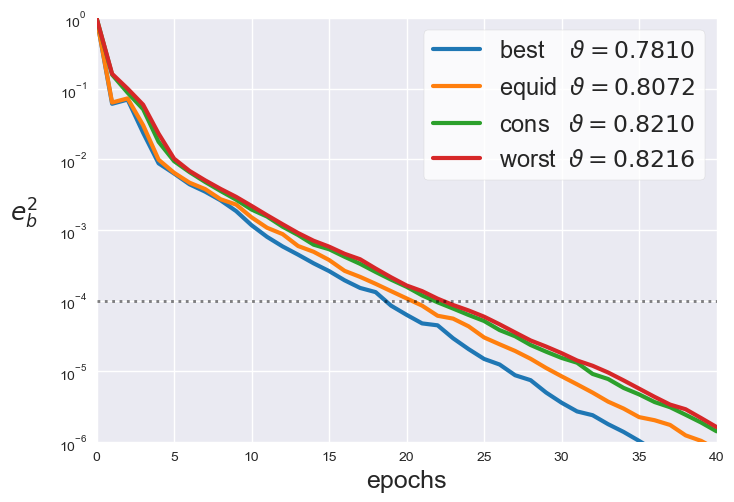}
\caption{ \textbf{Performance of different partitions for $b$=4}.
Relative primal error squared $\mathbf{e}^2_b$ for SPDHG with optimal $b$-serial sampling, using the partitions shown in Figure~\ref{fig:besta}. Every curve is the average of 40 independent runs. Performance is significantly improved by choosing the correct partition. The consecutive partition performs poorly although it is the most intuitive. }
\label{fig:bestb}
\end{figure}

Figure~\ref{fig:hista} 
shows the distribution of these rates over all possible 5775 partitions for $n=12$ and $b=4$ for optimal $b$-serial sampling. We see that some partitions result in significantly faster convergence rate than others. This naturally raises the question of what constitutes a good choice of partition.

%

From all the values in Figure~\ref{fig:hista}, 
we can identify the partitions that correspond to the best and worst convergence rate $\vartheta_{os}$. 
Figure~\ref{fig:besta} illustrates these partitions by showing their corresponding coils and their positions in the MRI scanner. 

Figure~\ref{fig:besta} also suggests that the physical location of the coils contributes to the quality of a partition. This is also supported by definitions~(\ref{theta_serial2}) and~(\ref{theta_epoch}), where the choice of partition defines the operators $\tilde{A}_j$.
%
%
%
With this intuition, Figure~\ref{fig:besta} includes two other partitions of interest: the \textit{consecutive} partition and the \textit{equidistant} partition. 

\begin{figure*}[ht]
    \centering
    \begin{subfigure}[b]{0.47\textwidth}
        \centering
        \includegraphics[width=\textwidth]{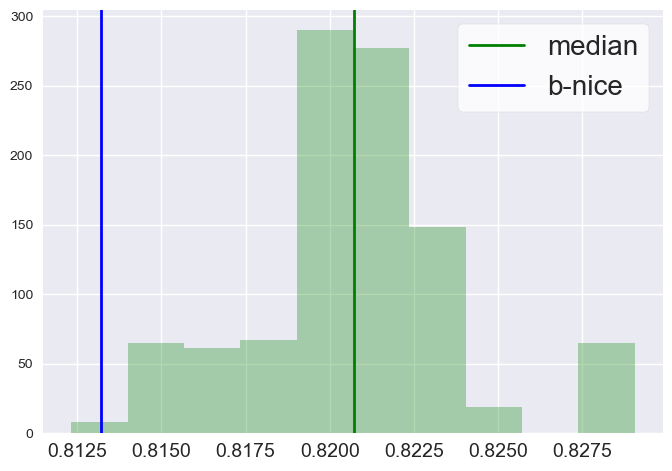}
        \vspace{1.2mm}
        \caption{Convergence rates $\vartheta_{us}$ for $b=4$}
    \label{fig:boxa}
    \end{subfigure}
    \hfill
    \begin{subfigure}[b]{0.47\textwidth}
        \centering
        \includegraphics[width=\textwidth]{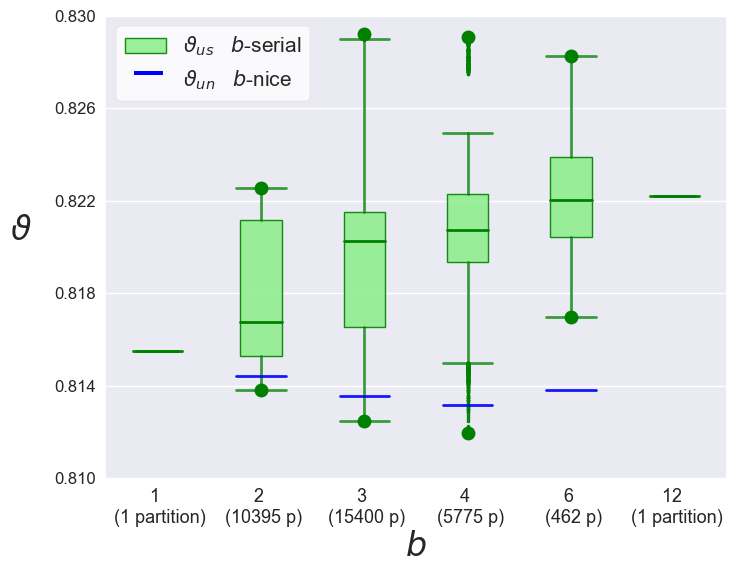}
        \caption{Convergence rates $\vartheta_{us}$ for all values of $b$}
    \label{fig:boxb}
    \end{subfigure}
    \caption{\textbf{Convergence rates for uniform $b$-serial sampling and uniform $b$-nice sampling.} Left: Convergence rates per epoch $\vartheta_{us}$ for uniform $b$-serial sampling for all 5775 possible partitions of batch size $b=4$. The convergence rate $\vartheta_{un}$ for uniform $b$-nice sampling does not depend on any partition. Right: Distribution of $\vartheta_{us}$
    for each $b$. Trivially, $b$-serial and $b$-nice are the same sampling for $b=1$, as well as for $b=n$. In every other case, $b$-nice sampling performs significantly better than the average partition of $b$-serial sampling, and almost as good as the best partition, if not better. }
    \label{fig:box}
\end{figure*}

Figure~\ref{fig:bestb} shows the performance of 
SPDHG using these four partitions. Notice how there is a significant improvement in performance by choosing the correct partition. {Furthermore, the performance for optimal $b$-serial sampling with the consecutive partition is almost as bad as using the worst possible partition.}
%

The histogram in Figure~\ref{fig:hista}
can also be computed for other values of $b$ in order to find the best partition for each $b$, i.e.\ the partition of batch size $b$ that yields the best convergence rate when using optimal $b$-serial sampling. Furthermore, we can also compute these histograms for uniform $b$-serial sampling for each $b$. In fact, each of the curves shown in Figure~\ref{fig:bserial_bfair} uses its corresponding best partition.

\subsection{\textit{b}-nice sampling versus \textit{b}-serial sampling }

Unlike $b$-serial sampling, $b$-nice sampling does not require data to be divided into subsets, thus its convergence $\vartheta_{un}$ its independent from any partition. Figure~\ref{fig:box} compares the performance of $b$-nice sampling against uniform $b$-serial sampling and its many possible partitions.

Figure~\ref{fig:boxa} shows the convergence rates $\vartheta_{us}$ for uniform $b$-serial sampling. 
The convergence rate $\vartheta_{un}$ for uniform $b$-nice sampling is shown in blue. 
We see that $b$-nice sampling has a faster convergence rate than $b$-serial sampling for most partitions. 

Figure~\ref{fig:boxb} shows the distribution of $\vartheta_{us}$ for all possible partitions for several values of $b$. As before, the rate $\vartheta_{un}$ for uniform $b$-nice sampling is independent of any partition. Trivially, for the special cases of $b=1$ and $b=n$ there is only one possible partition, and also the method defined by $b$-nice and $b$-serial sampling are the same, as they both correspond to either serial sampling for $b=1$ and full sampling for $b=n$.

Except from $b=6$, $b$-nice sampling performs better than $b$-serial sampling for most partitions. 
Notice that while the average partition of $b$-serial sampling performs better than full sampling (PDHG), the worst case the performance is significantly worse. 
This means that, when using uniform probabilities, if there is no information available about the partitions then $b$-nice sampling can be a good choice, since it guarantees good performance in any case, while $b$-serial requires a good choice of partition.

\section{Conclusions and Discussion}
We have closed a gap in the convergence analysis of SPDHG by extending its convergence guarantees to separable Hilbert spaces and any arbitrary sampling. We give a concrete strategy to find parameters that satisfy the required step size condition for all possible random samplings. For several specific random saplings, we determine theoretically optimal step size parameters.

We put these samplings to the test on parallel MRI reconstruction. 
Our experiments show that SPDHG performs better than the deterministic PDHG. Furthermore, sampling with optimised probabilities significantly improves convergence speed at no extra cost.

In addition, we show that the choice of random sampling also strongly influences the performance. In particular, when using $b$-serial sampling, physical properties of the MRI process such as the location of the coils can contribute to determining a good choice of partition. 

We conclude that SPDHG is an efficient stochastic method for relevant convex problems such as parallel MRI, and that the tools provided here help optimise its performance for different data-sampling strategies.

\section*{Acknowledgements}
MJE and CD acknowledge support from the EPSRC (EP/S026045/1). MJE is also supported by EPSRC (EP/T026693/1), the Faraday Institution
(EP/T007745/1) and the Leverhulme Trust (ECF-2019-478). EBG acknowledges the Mexican Council of Science and Technology (CONACyT).


\bibliography{sn-bibliography}

\end{document}